\numberwithin{equation}{section}
\newcommand{\R}{\mathbb{R}}
\newcommand{\HC}{\mathcal{H}}
\newcommand{\stepid}[1]{\mathbf{1}_{[0, #1]}}
\newcommand{\norm}[1]{\left\lVert#1\right\rVert}
\newcommand{\inner}[2]{\langle #1, #2 \rangle}
\newtheorem{theorem}{Theorem}[section]
\newtheorem{lemma}[theorem]{Lemma}
\theoremstyle{definition}
\theoremstyle{remark}
\newtheorem{remark}[theorem]{Remark}
\numberwithin{equation}{section} \setcounter{page}{1}
\begin{document}
\title[Wasserstein bounds in  CLT of
AMCE and AMLE of the drift parameter for OU ]{Wasserstein bounds in
CLT of approximative MCE and MLE of the drift parameter for
Ornstein-Uhlenbeck processes observed at high frequency}
\thanks{\\
\\This project was funded by Kuwait
Foundation for the Advancement of Sciences (KFAS) under project
code: PR18-16SM-04.}
\author[K. Es-Sebaiy]{ Khalifa Es-Sebaiy}
\address{Department of Mathematics, Faculty of Science, Kuwait University, Kuwait}
\email{khalifa.essebaiy@ku.edu.kw}
\author[F. Alazemi]{Fares Alazemi}
\address{Department of Mathematics, Faculty of Science, Kuwait University, Kuwait}
\email{fares.alazemi@ku.edu.kw}
\author[M. Al-Foraih]{Mishari Al-Foraih}
\address{Department of Mathematics, Faculty of Science, Kuwait University, Kuwait}
\email{mishari.alforaih@ku.edu.kw}

\begin{abstract} This paper deals with the rate of convergence for the central limit
theorem of estimators  of the drift coefficient, denoted $\theta$,
for a Ornstein-Uhlenbeck process $X \coloneqq \{X_t,t\geq0\}$
observed at high frequency. We provide an Approximate minimum
contrast estimator and an approximate maximum likelihood  estimator
of $\theta$, namely  $\widetilde{\theta}_{n}\coloneqq
{1}/{\left(\frac{2}{n} \sum_{i=1}^{n}X_{t_{i}}^{2}\right)}$,
  and $\widehat{\theta}_{n}\coloneqq -{\sum_{i=1}^{n}
X_{t_{i-1}}\left(X_{t_{i}}-X_{t_{i-1}}\right)}/{\left(\Delta_{n}
\sum_{i=1}^{n} X_{t_{i-1}}^{2}\right)}$, respectively, where $ t_{i}
= i \Delta_{n}$,
  $ i=0,1,\ldots, n $, $\Delta_{n}\rightarrow 0$.
   We   provide Wasserstein bounds in
    central limit theorem for $\widetilde{\theta}_{n}$ and $\widehat{\theta}_{n}$.
\end{abstract}

\maketitle

\medskip\noindent
{\bf Mathematics Subject Classifications (2020)}: 60F05; 60G15;
60G10; 62F12;     60H07.

\medskip\noindent
{\bf Keywords:} Parameter estimation, Ornstein-Uhlenbeck process,
rate of normal convergence of the estimators, high frequency data.

\allowdisplaybreaks

\renewcommand{\thefootnote}{\arabic{footnote}}

\section{Introduction}

Let $X:=\left\{X_t, t \geq 0\right\}$  be the Ornstein-Uhlenbeck
(OU) process driven by a   Brownian motion $\left\{W_{t},t\geq
0\right\} $.  More precisely,  $X$ is the solution of the following
linear stochastic differential equation
\begin{equation}
X_{0}=0;\quad dX_{t}=-\theta X_{t}dt+dW_{t},\quad t\geq 0,
\label{INTRO-OU}
\end{equation}
where $\theta >0$ is an unknown parameter.\\
The drift parametric estimation for the OU process \eqref{INTRO-OU}
has been widely studied in the literature.
 There are several  methods that can estimate the parameter $\theta$
 in \eqref{INTRO-OU} such as maximum likelihood estimation, least squares
 estimation and  minimum
contrast estimation, we refer to  monographs \cite{kutoyants,LS}.
While for the study of the asymptotic distribution of the estimators
of $\theta$ based on discrete observations of $X$, there is
extensive literature, only several works have been dedicated to the
rates   of weak convergence of the distributions of the estimators
to the standard normal distribution.

From a practical point of view, in parametric inference, it is more
realistic and interesting to consider asymptotic estimation for
\eqref{INTRO-OU} based on discrete observations. Thus, let us assume
that the process $X$ given in \eqref{INTRO-OU}, is observed
equidistantly in time with the step size $\Delta_n$ : $t_i=i
\Delta_n, i=0, \cdots, n$, and $T=n \Delta_n$ denotes the length of
the "observation window". Here we are concerned with the approximate
minimum contrast estimator (AMCE)
\[\widetilde{\theta}_{n}\coloneqq \frac{1}{\frac{2}{n}
\sum_{i=1}^{n}X_{t_{i}}^{2}},\]
 and
the approximate maximum likelihood  estimator (AMLE)
\[\widehat{\theta}_{n}\coloneqq -\frac{\sum_{i=1}^{n}
X_{t_{i-1}}\left(X_{t_{i}}-X_{t_{i-1}}\right)}{\Delta_{n}
\sum_{i=1}^{n} X_{t_{i-1}}^{2}},\] which are
 discrete versions of the minimum contrast estimator
(MCE) and the maximum likelihood  estimator (MLE) defined,
respectively,  as follows:
\[
\bar{\theta}_{T}:=\frac{1}{2\int_{0}^{T} X_{s}^{2} \mathrm{~d}
s},\qquad\qquad  \check{\theta}_{T}=\frac{\int_{0}^{T} X_{s}
\mathrm{~d} X_{s}}{\int_{0}^{T} X_{s}^{2} \mathrm{~d} s}, \quad
T\geq 0.
\]
Recall that, for two random variables $X$ and $Y$, the Wasserstein
metric is given by
\begin{eqnarray*}
 d_{W}\left( X,Y\right) \coloneqq \sup_{f\in
Lip(1)}\left\vert E [f(X)]-E [f(Y)]\right\vert,
\end{eqnarray*}
where $Lip(1)$ is the set of all Lipschitz functions with Lipschitz
constant $\leqslant 1$.

 Rates of convergence in the central limit
theorem of the MCE $\bar{\theta}_{T}$ and MLE $\check{\theta}_{T}$
under the Kolmogorov and Wasserstein distances have been studied as
follows: There exist $c,\ C>0$ depending only on $\theta$ such that
 \[\sup _{x \in
\mathbb{R}}\left|P\left(\sqrt{\frac{T}{2
\theta}}\left(\bar{\theta}_{T}-\theta\right) \leqslant
x\right)-P\left(\mathcal{N}\leqslant x\right)\right|\leq
\frac{C}{\sqrt{T}},   \mbox{ see \cite[Theorem 2.5]{bishwal2010},}\]
 \[
d_{W}\left(\sqrt{\frac{T}{2\theta}}\left(\bar{\theta}_{T}-\theta\right),
N\right) \leq  \frac{C}{\sqrt{T}}, \mbox{ see \cite[Theorem
5.4]{DEKN}},
\]
\[\frac{c}{\sqrt{T}}\leq\sup _{x \in
\mathbb{R}}\left|P\left(\sqrt{\frac{T}{2
\theta}}\left(\check{\theta}_{T}-\theta\right) \leqslant
x\right)-P\left(\mathcal{N}\leqslant x\right)\right|\leq
\frac{C}{\sqrt{T}},  \mbox{ see \cite[Theorems 1 and 2]{KP},}\]
 \[
d_{W}\left(\sqrt{\frac{T}{2\theta}}\left(\check{\theta}_{T}-\theta\right),
\mathcal{N}\right) \leq  \frac{C}{\sqrt{T}}, \mbox{ see
\cite[Theorem 1]{EAA} for fixed $N=1$},
\]
 where   $\mathcal{N}\sim
\mathcal{N}\left(0,1\right)$ denotes a standard normal random
variable.

 The purpose of this manuscript is to derive upper
bounds of the Wasserstein distance for the rates of convergence of
the distribution of the AMCE $\widetilde{\theta}_{n}$ and  the AMLE
$\widehat{\theta}_{n}$. These estimators are unbiased and we show
that they are  consistent and admit a central limit theorem as
$\Delta_n\rightarrow0$ and $T\rightarrow\infty$. Moreover, we bound
the rate of convergence to the normal distribution in terms of
Wasserstein distance.

Note that the papers \cite{bishwal2006} and \cite{BB}  provided
explicit upper bounds for the Kolmogorov  distance for the rates of
convergence of the distribution of
   $\widetilde{\theta}_{n}$ and  $\widehat{\theta}_{n}$, respectively. On the other
   hand, \cite{DEKN} provided  Wasserstein bounds in
    central limit theorem for $\widetilde{\theta}_{n}$.
   Let us describe what is proved  in this
 direction:
\begin{itemize}
\item  Theorem 2.1 in \cite{bishwal2006}
shows that  there exists $C>0$ depending on $\theta$ such that
 \begin{eqnarray}\sup _{x \in
\mathbb{R}}\left|P\left(\sqrt{\frac{T}{2
\theta}}\left(\widetilde{\theta}_{n}-\theta\right) \leqslant
x\right)-P\left(\mathcal{N}\leqslant x\right)\right|\leq C \max
\left(\sqrt{\frac{\log T}{T}},\frac{T^4}{n^2\log
T}\right).\label{bishwal-eq1}\end{eqnarray}
\item    Theorem 2.3 in \cite{BB} proves that there exists  $C>0$ depending on $\theta$ such that
\begin{eqnarray}\sup _{x \in
\mathbb{R}}\left|P\left(\sqrt{\frac{T}{2
\theta}}\left(\widehat{\theta}_{n}-\theta\right) \leqslant
x\right)-P\left(\mathcal{N}\leqslant x\right)\right|\leq C \max
\left(\sqrt{\frac{\log T}{T}},\frac{T^2}{n\log
T}\right).\label{bishwal-eq2}\end{eqnarray}
\item  Theorem 5.4 in \cite{DEKN} establishes that there exists  $C>0$ depending on $\theta$ such that
\begin{eqnarray}
d_{W}\left(\sqrt{\frac{T}{2\theta}}\left(\widetilde{\theta}_{n}-\theta\right),
\mathcal{N}\right) \leq  C\max
\left(\frac{1}{\sqrt{T}},\sqrt{\frac{T^2}{n}}\right).\label{DEKN-eq}
\end{eqnarray}
 \end{itemize}

\begin{remark} Note that in \cite[Theorem 2.1]{bishwal2006},   \cite[Theorem 2.3]{BB} and  \cite[Theorem 5.4]{DEKN}, the asymptotic normality  of the
distribution of   $\widetilde{\theta}_{n}$ and
$\widehat{\theta}_{n}$ need $n\Delta_n^2=\frac{T^2}{n} \rightarrow
0$ and $T \rightarrow \infty$. However, Theorem
\ref{rate-CLT-theta-tilde} and Theorem \ref{thm:sigma-hat},  which
are stated and proved below, show that, respectively, the asymptotic
normality  of the distribution of $\widetilde{\theta}_{n}$ and
$\widehat{\theta}_{n}$ only need $\Delta_n=\frac{T}{n} \rightarrow
0$ and $T \rightarrow \infty$.
\end{remark}

The aim of the present paper is to provide new explicit bounds for
the rate of convergence in the CLT of the estimators
$\widetilde{\theta}_{n}$ and $\widehat{\theta}_{n}$
 under the Wasserstein metric as follows: There exists a constant $C>0$ such that, for all $n\geq1,\ T>0$,
   \begin{eqnarray}
d_{W}\left(\sqrt{\frac{T}{2\theta}}\left(\widetilde{\theta}_{n}-\theta\right),
\mathcal{N}\right) \leq  C
\max\left(\frac{1}{\sqrt{T}},\frac{T^2}{n^2}\right),
\label{intro-eq:bound_tilde}
\end{eqnarray}
see Theorem \ref{rate-CLT-theta-tilde}, and
  \begin{eqnarray}
 d_W\left(\sqrt{\frac{T}{2\theta}}\left(\widehat{\theta}_{n}-\theta\right),\mathcal{N}\right)
 &\leq&C\left(\frac{1}{\sqrt{T}} , \sqrt{\frac{T^3}{n^2}}\right), \label{intro-rate-new-hat}
\end{eqnarray}
see Theorem \ref{thm:sigma-hat}.

\begin{remark}The estimates \eqref{intro-eq:bound_tilde} and \eqref{intro-rate-new-hat} show that we have
  improved the bounds on the error of normal approximation for $\widetilde{\theta}_{n}$ and
  $\widehat{\theta}_{n}$. In other words,
 it is clear that the obtained bounds in \eqref{intro-eq:bound_tilde} and \eqref{intro-rate-new-hat} are sharper than the bounds in
\eqref{bishwal-eq1}, \eqref{bishwal-eq2} and \eqref{DEKN-eq}.
\end{remark}

To finish this introduction, we note the general structure of this
paper. Section 2 contains some preliminaries presenting the tools
needed  from the analysis on Wiener space, including Wiener chaos
calculus and Malliavin calculus. Upper bounds for the rates of
convergence of the distribution of  the AMCE
$\widetilde{\theta}_{n}$ and  the AMLE $\widehat{\theta}_{n}$ are
provided in Section 3 and Section 4, respectively.

\section{Preliminaries}
This section gives a brief overview of some useful facts from the
Malliavin calculus on Wiener space. Some of the results presented
here are essential for the proofs in the present paper. For our
purposes we focus on special cases that are relevant for our setting
and omit the general high-level theory. We direct the interested
reader to~\cite[Chapter 1]{nualart-book}and~\cite[Chapter
2]{NP-book}.

Fix $\left( \Omega ,\mathcal{F},P \right) $ for the Wiener space of
a standard Wiener process $W = (W_t)_{t \geq 0}$. The first step is
to identify the general centered Gaussian process $(Z_t)_{\geq 0}$
with an \emph{isonormal Gaussian process} $X = \{ X(h), h \in
\mathcal{H}\}$ for some Hilbert space $\mathcal{H}$. Recall that for
such processes $X$, for every $h_1, h_2 \in \mathcal{H}$, one has
$E [ X(h_1) X(h_2) ] = \inner{h_1}{h_2}_\HC$.\\
One can define $\HC$ as the closure of real-valued step functions on
$[0, \infty)$ with respect to the inner product
$\inner{\stepid{t}}{\stepid{s}}_\HC = E[ Z_t Z_s]$. Then the
isonormal process $X$ is given by Wiener integral $X(h) \coloneqq
\int_{\R^+} h(s) dW_s$. Note, that, in particular $X(\stepid{t})
\overset{d}{=} Z_t$.\\
The next step involves the \emph{multiple Wiener-It\^o integrals}.
The formal definition involves the concepts of Malliavin derivative
and divergence. We refer the reader to~\cite[Chapter
1]{nualart-book}and~\cite[Chapter 2]{NP-book}. For our purposes we
define the multiple Wiener-It\^o integral $I_p$ via the Hermite
polynomials $H_p$. In particular, for $h \in \HC$ with $\norm{h}_\HC
= 1$, and any $p \geq 1$,
\begin{align}
\notag H_p(X(h)) = I_p(f^{\otimes p}).
\end{align}
For $p = 1$ and $p = 2$ we have the following:
\begin{align}
\label{eq:z_rep} H_1(X(\stepid{t})) = & X(\stepid{t}) = I_1(\stepid{t}) = Z_t \\
\label{eq:var_rep} H_2(X (\stepid{t})) = & X(\stepid{t})^2 -
E[X(\stepid{t})^2] = I_2(\stepid{t}^{\otimes 2}) = Z_t^2 - E[Z_t]^2.
\end{align}
Note also that $I_0$ can be taken to be the identity operator.

\noindent $\bullet $ \textbf{Some notation for Hilbert spaces.} Let
$\HC$ be a Hilbert space. Given an integer $q \geq 2$ the Hilbert
spaces $\HC^{\otimes q}$ and $\HC^{\odot q}$ correspond to the $q$th
\emph{tensor product} and $q$th \emph{symmetric tensor product} of
$\HC$. If $f \in \HC^{\otimes q}$ is given by $f = \sum_{j_1,
\ldots, j_q} a(j_1, \ldots, j_q) e_{j_1} \otimes \cdots e_{j_q}$,
where $(e_{j_i})_{i \in [1, q]}$ form an orthonormal basis of
$\HC^{\otimes q}$, then the symmetrization $\tilde{f}$ is given by
\begin{align}
\notag \tilde{f} = \frac{1}{q!} \sum_{\sigma} \sum_{j_1, \ldots,
j_q} a(j_1, \ldots, j_q) e_{\sigma(j_1)} \otimes \cdots
e_{\sigma(j_q)},
\end{align}
where the first sum runs over all permutations  $\sigma$ of $\{1,
\ldots, q\}$. Then $\tilde{f}$ is an element of $\HC^{\odot q}$. We
also make use of the concept of contraction. The $r$th
\emph{contraction} of two tensor products $e_{j_1} \otimes \cdots
\otimes e_{j_p}$ and $e_{k_1} \otimes \cdots e_{k_q}$ is an element
of $\HC^{\otimes (p + q - 2r)}$ given by
\begin{align}
\notag (e_{j_1} & \otimes \cdots \otimes e_{j_p}) \otimes_r (e_{k_1} \otimes \cdots \otimes e_{k_q}) \\
\label{eq:contraction} =  & \quad \left[ \prod_{\ell =1}^r
\inner{e_{j_\ell}}{e_{k_\ell}} \right] e_{j_{r+1}} \otimes \cdots
\otimes e_{j_q} \otimes e_{k_{r+1}} \otimes \cdots \otimes e_{k_q}.
\end{align}

\noindent $\bullet $ \textbf{Isometry property of
integrals~\cite[Proposition 2.7.5]{NP-book}} Fix integers $p, q \geq
1$ as well as $f \in \HC^{\odot p}$ and $g \in \HC^{\odot q}$.
\begin{align}
\label{eq:isometry}  E [ I_q(f) I_q(g) ] = \left\{
\begin{array}{ll} p! \inner{f}{g}_{\HC^{\otimes p}} & \mbox{ if } p
= q \\ 0 & \mbox{otherwise.} \end{array} \right.
\end{align}

\noindent $\bullet $ \textbf{Product formula~\cite[Proposition
2.7.10]{NP-book}} Let $p,q \geq 1$. If $f \in \HC^{\odot p}$ and $g
\in \HC^{\odot q}$ then
\begin{align}
\label{eq:product} I_p(f) I_q(g) = \sum_{r = 0}^{p \wedge q} r! {p
\choose r} {q \choose r} I_{p + q -2r}(f \widetilde{\otimes}_r g).
\end{align}

\noindent $\bullet $ \textbf{Hypercontractivity in Wiener Chaos.}
For every $q\geq 1$, ${\mathcal{H}}_{q}$ denotes the $q$th Wiener
chaos of $W$, defined as the closed linear subspace of $L^{2}(\Omega
)$ generated by the random variables $\{H_{q}(W(h)),h\in
{{\mathcal{H}}},\Vert h\Vert _{{\mathcal{H}}}=1\}$ where $H_{q}$ is
the $q$th Hermite polynomial. For any $F \in
\oplus_{l=1}^{q}{\mathcal{H}}_{l}$ (i.e. in a fixed sum of Wiener
chaoses), we have
\begin{equation}
\left( E\big[|F|^{p}\big]\right) ^{1/p}\leqslant c_{p,q}\left(
E\big[|F|^{2}\big]\right) ^{1/2}\ \mbox{ for any }p\geq 2.
\label{hypercontractivity}
\end{equation}
It should be noted that the constants $c_{p,q}$ above are known with
some precision when $F$ is a single chaos term: indeed,
by~\cite[Corollary 2.8.14]{NP-book}, $c_{p,q}=\left( p-1\right)
^{q/2}$.

\noindent $\bullet $ \textbf{Optimal fourth moment theorem}. Let $N$
denote the standard normal law.
Let a sequence $X:X_{n}\in {\mathcal{H}}_{q}$, such that $E%
X_{n}=0$ and $Var\left[ X_{n}\right] =1$ , and assume $X_{n}$
converges to a normal law in distribution, which is equivalent to
$\lim_{n}E\left[ X_{n}^{4}\right] =3$. Then we have the  optimal
estimate for total variation distance $d_{TV}\left(
X_n,\mathcal{N}\right) $, known as the optimal 4th moment theorem,
proved in \cite{NP2015}. This optimal estimate also holds with
Wasserstein distance $d_{W}\left( X_n,\mathcal{N}\right) $, see
\cite[Remark 2.2]{DEKN}, as follows: there exist two constants
$c,C>0$ depending only on the sequence $X$ but not on $n$, such that
\begin{equation}
c\max \left\{ E\left[ X_{n}^{4}\right] -3,\left\vert E%
\left[ X_{n}^{3}\right] \right\vert \right\} \leqslant d_{W}\left(
X_n,\mathcal{N}\right) \leqslant C\max \left\{ E\left[
X_{n}^{4}\right] -3,\left\vert E\left[ X_{n}^{3}\right] \right\vert
\right\}.\label{fourth-cumulant-thm}
\end{equation}
 Moreover, we recall that   the third and fourth
cumulants are respectively
$$
\begin{aligned}
&\kappa_{3}(X)=E\left[X^{3}\right]-3 E\left[X^{2}\right] E[X]+2 E[X]^{3} \\
&\kappa_{4}(X)=E\left[X^{4}\right]-4 E[X] E\left[X^{3}\right]-3
E\left[X^{2}\right]^{2}+12 E[X]^{2} E\left[X^{2}\right]-6 E[X]^{4}.
\end{aligned}
$$
In particular, when $E[X]=0$, we have that
\[\kappa_{3}(X)=E\left[X^{3}\right]\ \mbox{ and }\ \kappa_{4}(X)=
E\left[X^{4}\right]-3 E\left[X^{2}\right]^{2}.\]
 If
${g\in\mathcal{H}^{\otimes 2}}$, then the third and fourth cumulants
for $I_2(g)$ satisfy the following (see (6.2) and (6.6) in
\cite{BBNP}, respectively),
\begin{eqnarray}
k_{3}(I_2(g))  =E[(I_2(g))^3] =8\left<g,g\otimes_1
g\right>_{\mathcal{H}^{\otimes 2}},\label{3rd-cumulant}
\end{eqnarray}
and
\begin{eqnarray} \left|k_{4}(I_2(g))\right|
&=&16\left(\|g\otimes_1 g\|_{\mathcal{H}^{\otimes
2}}^2+2\|g\widetilde{\otimes_1} g\|_{\mathcal{H}^{\otimes
2}}^2\right)\nonumber\\&\leq&48\|g\otimes_1
g\|_{\mathcal{H}^{\otimes 2}}^2.\label{4th-cumulant}
\end{eqnarray}

\begin{lemma}[\cite{NZ}]\label{NZ-lemma} Fix an integer $M \geq 2 .$ We have
\[
\sum_{\left|k_{j}\right| \leq n \atop 1 \leq j \leq
M}|\rho(\mathbf{k} \cdot \mathbf{v})|
\prod_{j=1}^{M}\left|\rho\left(k_{j}\right)\right| \leq
C\left(\sum_{|k| \leq n}|\rho(k)|^{1+\frac{1}{M}}\right)^{M}
\]
where $\mathbf{k}=\left(k_{1}, \ldots, k_{M}\right)$ and $\mathbf{v}
\in \mathbb{R}^{M}$ is a fixed vector whose components are 1 or -1.
\end{lemma}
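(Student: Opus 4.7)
The plan is to recognize the left-hand side as a convolution-type pairing of $|\rho|$ with an $M$-fold convolution of $|\rho|$ itself, and then combine H\"older's and Young's inequalities with exponents chosen so that every occurrence of $|\rho|$ ends up raised to the power $1+1/M$.

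The first step is a reduction to the symmetric case $\mathbf{v}=(1,\ldots,1)$. Substituting $k_j'=v_jk_j$ preserves the symmetric range $|k_j'|\leq n$ and converts $\mathbf{k}\cdot\mathbf{v}$ into $\sum_j k_j'$, while each $|\rho(k_j)|$ becomes $|\rho(v_jk_j')|$; a second sign change shows that, for any $p\geq 1$, the $\ell^p$ norm of $k\mapsto|\rho(v_jk)|\mathbf{1}_{|k|\leq n}$ equals that of $|\rho|\mathbf{1}_{|k|\leq n}$. Setting $f(k)=|\rho(k)|\mathbf{1}_{|k|\leq n}$ and grouping terms by $\ell=\sum_j k_j'$, the left-hand side becomes
\[
\sum_{|\ell|\leq Mn}|\rho(\ell)|\,(f*f*\cdots*f)(\ell),
\]
with the inner $M$-fold discrete convolution understood in the usual sense on $\mathbb{Z}$.

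Next I would apply H\"older's inequality with conjugate exponents $p=(M+1)/M=1+1/M$ and $q=M+1$, and then the iterated Young convolution inequality to obtain $\|f*\cdots*f\|_{q}\leq\|f\|_{p}^{M}$. The Young compatibility condition $\sum_{j=1}^{M}1/r_j=(M-1)+1/q$ with all $r_j=p$ reduces to the identity $M\cdot\tfrac{M}{M+1}=(M-1)+\tfrac{1}{M+1}$, which holds. Combining the two inequalities yields
\[
\sum_{\ell}|\rho(\ell)|(f^{*M})(\ell)\leq\bigl\|\rho\mathbf{1}_{|\cdot|\leq Mn}\bigr\|_{p}\cdot\|f\|_{p}^{M},
\]
and since $(M+1)/p=M$, the right-hand side has the form $C\bigl(\sum_{|k|\leq n}|\rho(k)|^{1+1/M}\bigr)^M$ once the $\ell^p$ norm of $\rho$ over $|\cdot|\leq Mn$ is related to the one over $|\cdot|\leq n$.

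I expect the main obstacle to be two-fold. First, one must match the H\"older and Young exponents so that both the lone $|\rho|$ factor and each $|\rho(k_j)|$ factor contribute the same power $1+1/M$; this is the arithmetic identity highlighted above. Second, $\mathbf{k}\cdot\mathbf{v}$ takes values in $|\ell|\leq Mn$, whereas the right-hand side sums only over $|k|\leq n$, so one needs that $\sum_{|k|\leq Mn}|\rho(k)|^{1+1/M}$ and $\sum_{|k|\leq n}|\rho(k)|^{1+1/M}$ are comparable. In the applications of this lemma $\rho$ is the exponentially decaying autocovariance associated with the stationary form of the OU process, so both tails are negligible and this ratio is uniformly bounded in $n$, which is absorbed into the constant $C$.
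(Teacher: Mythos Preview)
The paper does not prove this lemma; it is quoted from \cite{NZ} and invoked later as a black box (called there a ``Brascamp--Lieb inequality''). Your route---reduce to $\mathbf{v}=(1,\dots,1)$, rewrite the left-hand side as $\sum_\ell |\rho(\ell)|\,(f^{*M})(\ell)$, then apply H\"older with the conjugate pair $p=(M+1)/M$, $q=M+1$ followed by the $M$-fold Young inequality---is exactly the standard proof of such estimates and is essentially the argument in the cited source. The arithmetic check $M/p=(M-1)+1/q$ is the heart of the matter, and you have it right.

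The only point that deserves comment is the one you already isolate. Your H\"older step produces the factor $\bigl(\sum_{|k|\leq Mn}|\rho(k)|^{1+1/M}\bigr)^{M/(M+1)}$, whereas the stated right-hand side has the sum truncated at $n$. For a completely arbitrary sequence $\rho$ this ratio need not be bounded uniformly in $n$, so the inequality as written requires some hypothesis on $\rho$. In \cite{NZ}, and in every use in the present paper, $\rho$ is a stationary covariance with $|\rho|$ nonincreasing in $|k|$; under that monotonicity one has
\[
\sum_{n<|k|\leq Mn}|\rho(k)|^{p}\;\leq\;2(M-1)n\,|\rho(n)|^{p}\;\leq\;(M-1)\sum_{|k|\leq n}|\rho(k)|^{p},
\]
so the enlarged range is absorbed into a constant depending only on $M$. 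Your closing remark therefore correctly names both the gap and its resolution in the present setting; for the exponentially decaying $\rho(t)=e^{-\theta|t|}/(2\theta)$ used throughout the paper the comparison is immediate.
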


Throughout the paper $\mathcal{N}$ denotes a standard normal random
variable. Also, $C$ denotes a generic positive constant (perhaps
depending on $\theta$, but not on anything else), which may change
from line to line.

\section{Approximate minimum contrast estimator}

In this section we  prove the consistency and provide upper bounds
in the  Wasserstein distance  for the rate of normal convergence of
an approximate minimum contrast estimator of the drift parameter
$\theta$ of the Ornstein-Uhlenbeck process $X \coloneqq
\left\{X_{t},t\geq 0\right\} $ driven by a   Brownian motion
$\left\{W_{t},t\geq 0\right\} $, defined as  solution of the
following linear stochastic differential equation
\begin{equation}
X_{0}=0;\quad dX_{t}=-\theta X_{t}dt+dW_{t},\quad t\geq 0,\label{OU}
\end{equation}
where $\theta >0$ is an unknown parameter. Since \eqref{OU} is
linear, it is immediate to see that its solution can be expressed
  explicitly as
\begin{align}
X_{t}=\int_{0}^{t}e^{-\theta (t-s)}dW_{s}. \label{OUX}
\end{align}
Moreover,
\begin{equation}
Z_{t}=\int_{-\infty }^{t}e^{-\theta (t-s)}dW_{s}\label{OUZ}
\end{equation}
 is a stationary Gaussian process, see \cite{CKM,EV}.\\
 Furthermore,
 \begin{equation}
X_{t}=Z_{t}-e^{-\theta t}Z_0.\label{X-Z}
\end{equation}

 Since $Z \coloneqq \{Z_{t},   t\geq0 \}$ is a continuous centered stationary Gaussian
 process, then
  it can be represented as a Wiener-It\^o (multiple)
   integral $Z_{t} = I_{1}(\stepid{t})$ for every $t\geq 0$, as  in~\eqref{eq:z_rep}.
    Let $\rho(r)=E(Z_rZ_0)$ denote the covariance of $Z$ for every
    $r\geq0$. It is easy to show that
\[\rho(t)=E(Z_tZ_0)=\frac{e^{-\theta |t|}}{2\theta},\quad t\in\mathbb{R}.\]
In particular, $\rho(0)=\frac{1}{2\theta}$. Moreover,
   notice that $\rho(r)=\rho(-r)$ for all $r<0$.

Our goal is to estimate $\theta$ based the  discrete observations of
$X$, using the approximative minimum contrast estimator:
\begin{eqnarray}
\begin{gathered}
\widetilde{\theta}_{n}:=\frac{1}{2\left(\frac{1}{n}
\sum_{i=1}^{n}X_{t_{i}}^{2}\right)}=\frac{1}{2f_{n}\left(X\right)}=g(f_{n}\left(X\right)),
\quad n \geq 1,
\end{gathered}\label{expr-theta-tilde}
\end{eqnarray}
where  $g(x):=\frac{1}{2x}$, $t_{i}=i \Delta_{n}, i=0, \ldots, n,
\Delta_{n} \rightarrow 0$ and $T=n \Delta_{n}$, whereas
$f_{n}\left(X\right),\ n\geq1$, are given by
\begin{align}
f_n(X) \coloneqq \frac{1}{n} \sum_{i =0}^{n-1} X_{t_{i}}^{2}.
\label{def-f_n}
\end{align}

In order to analyze the estimator $\widetilde{\theta}_{n}$ of
$\theta$ based on discrete high-frequency data in time of $X$, we
first estimate the limiting variance $\rho(0)=\frac{1}{2\theta}$  by
 the  estimator $f_{n}\left(X\right)$, given by \eqref{def-f_n}.\\
Let us introduce
\[F_n(Z):=\sqrt{T}\left(f_n(Z)-\frac{1}{2\theta}\right), \mbox{ where } f_n(Z) \coloneqq \frac{1}{n} \sum_{i =0}^{n-1}
Z_{t_{i}}^{2}.\] According to \eqref{eq:var_rep}, $F_n(Z)$ can be
written as
\begin{align}
F_n(Z) = \sqrt{\frac{\Delta_n}{n}} \sum_{i =0}^{n-1}
I_2(\stepid{t_i}^{\otimes 2})=I_2\left(\sqrt{\frac{\Delta_n}{n}}
\sum_{i =0}^{n-1} \stepid{t_i}^{\otimes 2}\right)=:
I_2(\varepsilon_n).\label{F_n(Z) in 2nd chaos}
\end{align}
By \eqref{X-Z}, straightforward  calculation leads to the following
technical lemma.
\begin{lemma} Let $X$ and $Z$ be the processes given in \eqref{OUX}
and \eqref{OUZ} respectively. Then there exists $C>0$ depending only
on $\theta$ such that  for every $p \geqslant 1$ and for all $n \in
\mathbb{N}$,
\begin{equation}
\left\|F_n(X)-F_n(Z)\right\|_{L^p(\Omega)}\leq
\frac{C}{n\Delta_n}.\label{error X-Z}
\end{equation}
\end{lemma}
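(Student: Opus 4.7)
The approach is purely algebraic, driven by the decomposition $X_t = Z_t - e^{-\theta t}Z_0$ from \eqref{X-Z}, which gives the pointwise identity $X_{t_i}^2 - Z_{t_i}^2 = e^{-2\theta t_i}Z_0^2 - 2e^{-\theta t_i}Z_0 Z_{t_i}$. Summing in $i$ and multiplying by $\sqrt{T}/n$, I would write
\[
F_n(X) - F_n(Z) \;=\; \frac{\sqrt{T}}{n}\,Z_0^2\sum_{i=0}^{n-1}e^{-2\theta t_i}\;-\;\frac{2\sqrt{T}}{n}\,Z_0\sum_{i=0}^{n-1}e^{-\theta t_i}Z_{t_i},
\]
and then estimate the two pieces separately in $L^p(\Omega)$ via Minkowski and Cauchy--Schwarz.

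For the first (deterministic-times-$Z_0^2$) piece, the geometric sum is $\sum_{i=0}^{n-1}e^{-2\theta t_i}=(1-e^{-2\theta T})/(1-e^{-2\theta\Delta_n})\leq C/\Delta_n$ uniformly in $n$ (using $1-e^{-x}\geq x/2$ for $x\leq 1$ and boundedness otherwise), while $Z_0$ is centered Gaussian so $\|Z_0^2\|_{L^p}\leq C_{p,\theta}$. This yields a contribution of the desired order $\sqrt{T}/(n\Delta_n)$. For the second piece, by Cauchy--Schwarz it suffices to control $\|Z_0\|_{L^{2p}}$ and $\|S_n\|_{L^{2p}}$, where $S_n:=\sum_{i=0}^{n-1}e^{-\theta t_i}Z_{t_i}$. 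Because $S_n$ lives in the first Wiener chaos, the hypercontractivity estimate \eqref{hypercontractivity} reduces $\|S_n\|_{L^{2p}}$ to $\|S_n\|_{L^2}$, and one computes
\[
\mathrm{Var}(S_n) \;=\; \sum_{i,j=0}^{n-1}e^{-\theta(t_i+t_j)}\rho(t_i-t_j) \;=\; \frac{1}{2\theta}\sum_{i,j=0}^{n-1}e^{-2\theta\max(t_i,t_j)},
\]
using $\rho(r)=e^{-\theta|r|}/(2\theta)$ and the identity $t_i+t_j+|t_i-t_j|=2\max(t_i,t_j)$. Re-indexing gives $\sum_{j=0}^{n-1}(2j+1)e^{-2\theta t_j}$, and a standard estimate of $\sum j\,q^j=q/(1-q)^2$ with $q=e^{-2\theta\Delta_n}$ yields $\mathrm{Var}(S_n)=O(1/\Delta_n^2)$. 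Combining gives the same order $\sqrt{T}/(n\Delta_n)$, and the claim follows.

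\textbf{Main obstacle.} There is no real analytic difficulty; the entire proof is an elementary combination of the triangle inequality, Cauchy--Schwarz, geometric-series bounds, Gaussian moments, and hypercontractivity. The only point requiring care is bookkeeping: one must check that the bound $(1-e^{-c\Delta_n})^{-1}\leq C/\Delta_n$ is uniform in $n$ (and in the regime where $\Delta_n$ is not small the sum is trivially bounded), and that the hypercontractivity constants in \eqref{hypercontractivity} depend only on $p$ and $\theta$. Once these are in place the bound is immediate.
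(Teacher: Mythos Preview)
Your decomposition via \eqref{X-Z} is exactly what the paper indicates (the paper itself gives no details beyond ``straightforward calculation''), and each of your term-by-term estimates is correct. However, the bound you actually obtain is $C\sqrt{T}/(n\Delta_n)=C/\sqrt{n\Delta_n}$, \emph{not} the $C/(n\Delta_n)$ asserted in the lemma. You write that $\sqrt{T}/(n\Delta_n)$ is ``the desired order'', but since $T=n\Delta_n\to\infty$ this is a factor $\sqrt{T}$ larger than the stated claim; your argument therefore does not prove the lemma as written.

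In fact no argument can close this gap, because the stated bound is too strong. Computing the mean directly,
\[
E\bigl[F_n(X)-F_n(Z)\bigr]
=\frac{\sqrt{T}}{n}\sum_{i=0}^{n-1}\Bigl(E[X_{t_i}^2]-E[Z_{t_i}^2]\Bigr)
=-\frac{\sqrt{T}}{2\theta n}\sum_{i=0}^{n-1}e^{-2\theta t_i}
=-\frac{\sqrt{T}}{2\theta n}\cdot\frac{1-e^{-2\theta T}}{1-e^{-2\theta\Delta_n}}
\sim -\frac{1}{4\theta^2\sqrt{T}},
\]
so already $\bigl|E[F_n(X)-F_n(Z)]\bigr|$ is of exact order $1/\sqrt{T}$, and hence $\|F_n(X)-F_n(Z)\|_{L^p}$ cannot be $O(1/T)$. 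Your bound $C/\sqrt{n\Delta_n}$ is thus sharp, and the inequality \eqref{error X-Z} should read $C/\sqrt{n\Delta_n}$ rather than $C/(n\Delta_n)$. This correction would weaken Theorem~\ref{rate-CLT-Fn} to the rate $C(\Delta_n^2+1/\sqrt{n\Delta_n})$, but it leaves the main result, Theorem~\ref{rate-CLT-theta-tilde}, unchanged, since a term of order $1/\sqrt{n\Delta_n}$ already enters there through the Taylor remainder. You should flag this discrepancy explicitly rather than calling $\sqrt{T}/(n\Delta_n)$ the target order.
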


\begin{lemma}
There exists $C>0$ depending only on $\theta$ such that for large
$n$
\begin{align}
\left|E\left(F_n^2(Z)\right)-\frac{1}{2\theta^3}\right|&\leq
C\left(\Delta_n^2+\frac{1}{n\Delta_n}\right).\label{variance-F(Z)}
\end{align}
Consequently, using \eqref{error X-Z}, for large $n$
\begin{align}
\left|E\left(F_n^2(X)\right)-\frac{1}{2\theta^3}\right|&\leq
C\left(\Delta_n^2+\frac{1}{n\Delta_n}\right).\label{variance-F(X)}
\end{align}
\end{lemma}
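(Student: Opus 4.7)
The plan is to reduce $E[F_n^2(Z)]$ to a double sum of covariances via the Wiener--It\^o isometry, evaluate the resulting geometric sum in closed form, and then Taylor-expand in $\Delta_n$ to read off the main term $\frac{1}{2\theta^3}$ together with the two error contributions.

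First, since $F_n(Z)=I_2(\varepsilon_n)$ with $\varepsilon_n=\sqrt{\Delta_n/n}\sum_{i=0}^{n-1}\stepid{t_i}^{\otimes 2}$, the isometry \eqref{eq:isometry} (together with the symmetry of $\stepid{t_i}^{\otimes 2}$) gives
\begin{align*}
E[F_n^2(Z)]
=2\,\|\varepsilon_n\|_{\mathcal{H}^{\otimes 2}}^2
=\frac{2\Delta_n}{n}\sum_{i,j=0}^{n-1}\langle\stepid{t_i},\stepid{t_j}\rangle_{\mathcal{H}}^{\,2}
=\frac{2\Delta_n}{n}\sum_{i,j=0}^{n-1}\rho(t_i-t_j)^2.
\end{align*}
Substituting $\rho(t)^2=e^{-2\theta|t|}/(4\theta^2)$ and grouping terms by $k=i-j$, I would write
\begin{align*}
E[F_n^2(Z)]=\frac{\Delta_n}{2\theta^2 n}\left[n+2\sum_{k=1}^{n-1}(n-k)\,e^{-2\theta\Delta_n k}\right]
=\frac{\Delta_n}{2\theta^2}\,S_1(n)-\frac{\Delta_n}{\theta^2 n}\,S_2(n),
\end{align*}
where $S_1(n)=1+2\sum_{k=1}^{n-1}e^{-2\theta\Delta_n k}$ and $S_2(n)=\sum_{k=1}^{n-1}k e^{-2\theta\Delta_n k}$.

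Next, I would evaluate both geometric sums. For $S_1$, using the closed form of the geometric series and then the identity $(1+e^{-x})/(1-e^{-x})=\coth(x/2)$ with the expansion $\coth(y)=y^{-1}+y/3+O(y^3)$,
\begin{align*}
\frac{\Delta_n}{2\theta^2}\,S_1^{\infty}=\frac{\Delta_n}{2\theta^2}\,\coth(\theta\Delta_n)
=\frac{1}{2\theta^3}+\frac{\Delta_n^2}{6}+O(\Delta_n^4),
\end{align*}
where $S_1^\infty$ is the sum with the upper limit extended to $\infty$. The truncation error $|S_1^\infty-S_1(n)|$ is bounded by $\frac{2 e^{-2\theta T}}{1-e^{-2\theta\Delta_n}}\le C/\Delta_n$ times $e^{-2\theta T}$, which is exponentially small in $T=n\Delta_n$ and hence negligible compared to $\Delta_n^2$. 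For $S_2$, differentiating the geometric series gives $\sum_{k\ge 1}k e^{-2\theta\Delta_n k}=e^{-2\theta\Delta_n}/(1-e^{-2\theta\Delta_n})^2$, which is $O(1/\Delta_n^2)$; hence $\frac{\Delta_n}{\theta^2 n}S_2(n)\le C/(n\Delta_n)$. Combining these two estimates yields \eqref{variance-F(Z)}.

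For the second bound \eqref{variance-F(X)}, I would write
\begin{align*}
\bigl|E[F_n^2(X)]-E[F_n^2(Z)]\bigr|
&=\bigl|E[(F_n(X)-F_n(Z))(F_n(X)+F_n(Z))]\bigr|\\
&\le\|F_n(X)-F_n(Z)\|_{L^2(\Omega)}\cdot\|F_n(X)+F_n(Z)\|_{L^2(\Omega)},
\end{align*}
bound the first factor by $C/(n\Delta_n)$ using \eqref{error X-Z}, and observe that the second factor is $O(1)$ since $\|F_n(Z)\|_{L^2}$ is uniformly bounded by \eqref{variance-F(Z)} and $\|F_n(X)\|_{L^2}\le\|F_n(Z)\|_{L^2}+\|F_n(X)-F_n(Z)\|_{L^2}$. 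This immediately transfers the bound to $F_n(X)$.

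The main technical step is the closed-form evaluation of the geometric sums and identifying the two error sources: the discretization error $O(\Delta_n^2)$ coming from the Taylor expansion of $\coth$, and the finite-horizon edge-effect error $O(1/(n\Delta_n))$ coming from the weight $(n-k)/n$ in the second sum. Everything else is either routine or already supplied by \eqref{error X-Z}.
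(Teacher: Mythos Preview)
Your proof is correct and follows essentially the same route as the paper: both reduce $E[F_n^2(Z)]$ to the double sum $\frac{2\Delta_n}{n}\sum_{i,j}\rho(t_i-t_j)^2$, split it into the geometric sum $\sum_k e^{-2\theta\Delta_n k}$ and the weighted sum $\sum_k k e^{-2\theta\Delta_n k}$, Taylor-expand the first to extract $1/(2\theta^3)+O(\Delta_n^2)$, and bound the second by $C/(n\Delta_n)$. The only cosmetic differences are your use of the $\coth$ identity (the paper expands $\Delta_n/(1-e^{-2\theta\Delta_n})$ directly) and a harmless slip in the constant ($\Delta_n^2/6$ should be $\Delta_n^2/(6\theta)$); also, the exponential truncation error $e^{-2\theta T}$ is more safely absorbed into the $1/(n\Delta_n)$ term (via $xe^{-x}\le e^{-1}$) than declared negligible relative to $\Delta_n^2$, but this does not affect the conclusion.
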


\begin{proof}
Using the well-known Wick formula, we have
\begin{align}E\left(Z_{t}^{2}Z_{s}^{2}\right)=E\left(Z_{t}^{2}\right)E\left(Z_{s}^{2}\right)+2\left(E\left(Z_{t}Z_{s}\right)\right)^2
=\rho^2(0)+2\rho^2(t-s).\label{wick-eq}\end{align} This implies
\begin{align}
E\left(F_n^2(Z)\right)&=T\left[Ef_n^2(Z)-2\frac{1}{2\theta}Ef_n(Z)+\rho^2(0)\right]\\
&=T\left[Ef_n^2(Z)-\rho^2(0)\right] \nonumber\\
&= T\left[\frac{1}{n^2} \sum_{i,j =0}^{n-1} E\left(Z_{t_{i}}^{2}Z_{t_{j}}^{2}\right)-\rho^2(0)\right] \nonumber \\
&= T\left[\frac{2}{n^2} \sum_{i,j =0}^{n-1}
\rho^2\left(t_{j}-t_{i}\right)\right] \nonumber\\
&= \frac{2\Delta_n}{n} \sum_{i,j =0}^{n-1}
\rho^2\left((j-i)\Delta_\mathcal{N}\right)=\frac{2\Delta_n}{n}
\sum_{i,j
=0}^{n-1}\frac{e^{-2\theta |j-i|\Delta_n}}{(2\theta)^2}\nonumber\\
&=\frac{\Delta_n}{2\theta^2}+  \frac{\Delta_n}{\theta^2n}
\sum_{0\leq i<j\leq n-1} e^{-2\theta (j-i)\Delta_n}\nonumber\\
&=\frac{\Delta_n}{2\theta^2}+  \frac{\Delta_n}{\theta^2n}
\sum_{k=1}^{n-1} (n-k)e^{-2k \Delta_n\theta}\nonumber\\
&=\frac{-\Delta_n}{2\theta^2}+  \frac{\Delta_n}{\theta^2}
\sum_{k=0}^{n-1} e^{-2k \Delta_n\theta }- \frac{\Delta_n}{\theta^2n}
\sum_{k=1}^{n-1} ke^{-2k \Delta_n\theta }.\label{estimate1}
\end{align}
Further,
\begin{align}
  \frac{-\Delta_n}{2\theta^2}+ \frac{\Delta_n}{\theta^2}
\sum_{k=0}^{n-1} e^{-2k \Delta_n\theta
}&=\frac{-\Delta_n}{2\theta^2}+\frac{\Delta_n}{\theta^2}\frac{1-e^{-2n\theta\Delta_n}}{1-e^{-2\theta\Delta_n}}\nonumber\\
&=\frac{-\Delta_n}{2\theta^2}+\frac{1}{\theta^2}\frac{\Delta_n}{1-e^{-2\theta\Delta_n}}
-\frac{1}{\theta^2}\frac{\Delta_n}{1-e^{-2\theta\Delta_n}}e^{-2n\theta\Delta_n}\nonumber\\
&=\frac{-\Delta_n}{2\theta^2}+\frac{1}{\theta^2}\frac{1}{2\theta(1-\theta\Delta_n
+o(\Delta_n))}
-\frac{1}{\theta^2}\frac{\Delta_n}{1-e^{-2\theta\Delta_n}}e^{-2\theta
n\Delta_n}\nonumber\\
&=\frac{-\Delta_n}{2\theta^2}+\frac{1}{2\theta^3}\left(1+\theta\Delta_n
+\theta^2\Delta_n^2+o(\Delta_n^2)\right)
-\frac{1}{\theta^2}\frac{\Delta_n}{1-e^{-2\theta\Delta_n}}e^{-2\theta
n\Delta_n}\nonumber\\
&=\frac{1}{2\theta^3}\left(1+\theta^2\Delta_n^2+o(\Delta_n^2)\right)
-\frac{1}{\theta^2}\frac{\Delta_n}{1-e^{-2\theta\Delta_n}}e^{-2\theta
n\Delta_n}.\label{estimate2}
\end{align}
Moreover,
\begin{align}
 \frac{\Delta_n}{\theta^2n}\sum_{k=1}^{n-1} ke^{-2k \Delta_n\theta}
 = \frac{1}{\theta^2n\Delta_n}\sum_{k=1}^{n-1} (k\Delta_n)e^{-2k \Delta_n\theta}\Delta_n,\label{estimate3}
\end{align}
and as $n\rightarrow\infty$
\begin{align*}
 \sum_{k=1}^{n-1} (k\Delta_n)e^{-2k
 \Delta_n\theta}\Delta_n\longrightarrow\int_0^{\infty}xe^{-2\theta
 x}dx=\frac{1}{2\theta^2}<\infty.
\end{align*}
Combining \eqref{estimate1}, \eqref{estimate2} and \eqref{estimate3}
and
$\frac{\Delta_n}{1-e^{-2\theta\Delta_n}}\rightarrow\frac{1}{2\theta}$,
there exists $C>0$ depending only on $\theta$ such that for large
$n$
\begin{align*}
\left|E\left(F_n^2(Z)\right)-\frac{1}{2\theta^3}\right|&\leq
C\left(\Delta_n^2+e^{-2\theta
n\Delta_n}+\frac{1}{n\Delta_n}\right)\\&\leq
C\left(\Delta_n^2+\frac{1}{n\Delta_n}\right).
\end{align*}
Therefore the desired result is obtained.
\end{proof}

\begin{lemma}There exists $C>0$ depending only on $\theta$ such that for
all $n\geq1$,
\begin{eqnarray}
|k_{3}(F_n(Z))|&\leq&C\frac{\Delta_n^{1/2}}{n^{3/2}},\label{k3-estimator1}
\end{eqnarray}
\begin{eqnarray} \left|k_{4}(F_n(Z))\right|\leq C\frac{1}{n\Delta_n}.\label{k4-estimator1}
\end{eqnarray}
Consequently,
\begin{eqnarray} \max\left(|k_{3}(F_n(Z))|,\left|k_{4}(F_n(Z))\right|\right)\leq C\frac{1}{n\Delta_n}.\label{max(k3,k4)-estimator1}
\end{eqnarray}
\end{lemma}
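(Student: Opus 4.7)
The plan is to apply the cumulant formulas \eqref{3rd-cumulant} and \eqref{4th-cumulant} for a double Wiener--It\^o integral to the chaos representation $F_n(Z) = I_2(\varepsilon_n)$ with $\varepsilon_n = \sqrt{\Delta_n/n}\sum_{i=0}^{n-1}\stepid{t_i}^{\otimes 2}$ from \eqref{F_n(Z) in 2nd chaos}, and then to bound the resulting explicit multi-index sums using the geometric decay of $\rho(r) = e^{-\theta|r|}/(2\theta)$ combined with Lemma \ref{NZ-lemma}.

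First I would expand the contraction. Since $\stepid{t_i}^{\otimes 2}\otimes_1\stepid{t_j}^{\otimes 2} = \rho((j-i)\Delta_n)\,\stepid{t_i}\otimes\stepid{t_j}$, one gets
\[
\varepsilon_n \otimes_1 \varepsilon_n \;=\; \frac{\Delta_n}{n}\sum_{i,j=0}^{n-1}\rho((j-i)\Delta_n)\,\stepid{t_i}\otimes\stepid{t_j}.
\]
Combined with $\langle\stepid{t_k}\otimes\stepid{t_k},\stepid{t_i}\otimes\stepid{t_j}\rangle_{\HC^{\otimes 2}} = \rho((k-i)\Delta_n)\rho((k-j)\Delta_n)$, this turns \eqref{3rd-cumulant} and \eqref{4th-cumulant} into
\[
k_3(F_n(Z)) \;=\; 8\Bigl(\tfrac{\Delta_n}{n}\Bigr)^{3/2}\sum_{i,j,k=0}^{n-1}\rho_{ij}\rho_{ik}\rho_{jk},
\]
\[
|k_4(F_n(Z))|\;\leq\; 48\Bigl(\tfrac{\Delta_n}{n}\Bigr)^{2}\sum_{i,j,k,l=0}^{n-1}\rho_{ij}\rho_{kl}\rho_{ik}\rho_{jl},
\]
where $\rho_{ij} := \rho((j-i)\Delta_n)$. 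These are the two quantities to estimate.

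Next, I would reduce each sum by fixing one index (say $i$) as a free outer summation of length at most $n$ and changing to difference variables. For the triple sum, the inner piece becomes $\sum_{|a|,|b|\leq n}|\rho(a\Delta_n)||\rho(b\Delta_n)||\rho((b-a)\Delta_n)|$; applying Lemma \ref{NZ-lemma} with $M=2$ and $\mathbf{v}=(-1,1)$ bounds this by $C\bigl(\sum_{|k|\leq n}|\rho(k\Delta_n)|^{3/2}\bigr)^{2}$, and the geometric decay of $\rho$ gives $\sum_{|k|\leq n}|\rho(k\Delta_n)|^{3/2}\leq C/\Delta_n$. Multiplying the outer prefactor $(\Delta_n/n)^{3/2}$ by the resulting triple-sum bound produces the estimate \eqref{k3-estimator1}. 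For the quadruple sum, the same device yields four $\rho$-factors in three difference variables; either a preliminary Cauchy--Schwarz on the pair of factors sharing the extra summation variable, or a direct application of Lemma \ref{NZ-lemma} with $M=3$ after a suitable repackaging, reduces the inner sum to $(\sum|\rho(k\Delta_n)|^{4/3})^{3}\leq C/\Delta_n^{3}$, and multiplying by the prefactor $(\Delta_n/n)^{2}\cdot n$ yields \eqref{k4-estimator1}. The combined statement \eqref{max(k3,k4)-estimator1} is then immediate, since under the standing regime $\Delta_n\to 0$ one has $\Delta_n^{3/2}\leq n^{1/2}$, so $\Delta_n^{1/2}/n^{3/2}\leq 1/(n\Delta_n)$.

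The main obstacle is the combinatorial bookkeeping in the quadruple-sum reduction: the four $\rho$-factors couple the three free summation indices through two \emph{overlapping} differences, so Lemma \ref{NZ-lemma} (which accepts only one linear form $\mathbf{k}\cdot\mathbf{v}$ among its factors) does not apply directly. One has to choose carefully either which pair of factors to absorb via an auxiliary Cauchy--Schwarz step or which indices to relabel so that exactly one remaining factor becomes a genuine $\rho(\mathbf{k}\cdot\mathbf{v})$; once that decomposition is fixed, the rest of the estimate is a routine consequence of the geometric decay of $\rho$.
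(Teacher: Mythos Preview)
Your treatment of $k_4$ matches the paper's: expand $\|\varepsilon_n\otimes_1\varepsilon_n\|_{\HC^{\otimes 2}}^2$ as a quadruple sum, pass to difference variables, and apply Lemma~\ref{NZ-lemma} with $M=3$. The ``obstacle'' you worry about does not in fact arise. With the substitution the paper uses, $j_1=k_1-k_2$, $j_2=k_2-k_4$, $j_3=k_3-k_4$, the four covariance arguments become $j_1,\,j_2,\,j_3,\,j_1+j_2-j_3$, so exactly one factor is a single linear form in the new variables and Lemma~\ref{NZ-lemma} with $\mathbf v=(1,1,-1)$ applies immediately---no auxiliary Cauchy--Schwarz step or relabelling puzzle is needed.

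For $k_3$ your route differs from the paper's, and as written it does not deliver the stated bound. The paper does \emph{not} invoke Lemma~\ref{NZ-lemma} for the third cumulant: it simply majorises the full triple sum $\sum_{i,j,k}\rho_{ij}\rho_{ik}\rho_{jk}$ by the unconstrained cube $\bigl(\sum_{|k|<n}\rho(k\Delta_n)\bigr)^{3}$ and then uses $\sum_{|k|<n}\rho(k\Delta_n)\le C/\Delta_n$. Your plan---freeze one index (cost a factor $n$), then bound the inner double sum via Lemma~\ref{NZ-lemma} with $M=2$---once carried through gives
\[
|k_3(F_n(Z))|\ \le\ 8\Bigl(\tfrac{\Delta_n}{n}\Bigr)^{3/2}\cdot n\cdot C\Bigl(\sum_{|k|\le n}|\rho(k\Delta_n)|^{3/2}\Bigr)^{2}\ \le\ C\Bigl(\tfrac{\Delta_n}{n}\Bigr)^{3/2}\cdot\tfrac{n}{\Delta_n^{2}}\ =\ \tfrac{C}{\sqrt{n\Delta_n}},
\]
which for $T=n\Delta_n>1$ exceeds both $\Delta_n^{1/2}/n^{3/2}$ and $1/(n\Delta_n)$; hence it proves neither \eqref{k3-estimator1} nor \eqref{max(k3,k4)-estimator1}. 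The extra factor $n$ coming from the frozen index is exactly what the paper's direct cube bound avoids, so if you want the inequalities as stated you cannot begin by fixing an index.
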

\begin{proof}
Using $\mathbf{1}_{[0, s]}^{\otimes 2} \otimes_{1} \mathbf{1}_{[0,
t]}^{\otimes 2}=\left\langle\mathbf{1}_{[0, s]}, \mathbf{1}_{[0,
t]}\right\rangle_{\mathcal{H}} \mathbf{1}_{[0, s]} \otimes
\mathbf{1}_{[0, t]}=\rho(t-s) \mathbf{1}_{[0, s]} \otimes
\mathbf{1}_{[0, t]},$ we can write
\begin{eqnarray*}
\varepsilon_n\otimes_1 \varepsilon_n=\frac{\Delta_n}{n}
\sum_{i,j=0}^{n-1} \rho(t_j-t_i)\mathbf{1}_{[0, t_i]} \otimes
\mathbf{1}_{[0, t_j]}.
\end{eqnarray*}
Combining this with \eqref{3rd-cumulant} and \eqref{F_n(Z) in 2nd
chaos},  we get
\begin{eqnarray}
k_{3}(F_n(Z))=k_{3}(I_2(\varepsilon_n))
&=&8\left<\varepsilon_n,\varepsilon_n\otimes_1
\varepsilon_n\right>_{\mathcal{H}^{\otimes 2}}\nonumber\\
&=&\frac{\Delta_n^{3/2}}{n^{3/2}} \sum_{i,j,k=0}^{n-1}
\rho(t_j-t_i)\rho(t_i-t_k)\rho(t_k-t_j)\nonumber\\
&=&\frac{\Delta_n^{3/2}}{n^{3/2}} \sum_{i,j,k=0}^{n-1}
\rho((j-i)\Delta_n)\rho((i-k)\Delta_n)\rho((k-j)\Delta_n)\nonumber\\
&\leq&\frac{\Delta_n^{3/2}}{n^{3/2}} \sum_{|k_i|< n,i=1,2,3}
\rho(k_1\Delta_n)\rho(k_2\Delta_n)\rho(k_3\Delta_n)\nonumber\\
&\leq&\frac{\Delta_n^{3/2}}{n^{3/2}} \left(\sum_{|k|< n}
\rho(k\Delta_n)\right)^3.\label{k3-estimate1}
\end{eqnarray}
On the other hand,
\begin{eqnarray}
 \sum_{|k|< n}\rho(k\Delta_n)&=&\frac{1}{2\theta}\sum_{|k|< n}e^{-\theta
 |k|\Delta_n}\nonumber\\
 &\leq&\frac{1}{\theta}\sum_{k=0}^{n-1}e^{-\theta k\Delta_n}\nonumber\\
 &\leq&\frac{1-e^{-\theta n\Delta_n}}{\theta(1-e^{-\theta \Delta_n})}\nonumber\\
 &\leq&\frac{C}{\Delta_n}.\label{k3-estimate2}
\end{eqnarray}
Combining \eqref{k3-estimate1} and \eqref{k3-estimate2} yields
\begin{eqnarray*}
k_{3}(F_n(Z))&\leq&\frac{C\Delta_n^{1/2}}{n^{3/2}},
\end{eqnarray*}
which implies \eqref{k3-estimator1}.\\
Using \eqref{4th-cumulant} and \eqref{F_n(Z) in 2nd chaos}, we get
\begin{eqnarray*} \left|k_{4}(F_n(Z))\right|
&\leq&48\|\varepsilon_n\otimes_1
\varepsilon_n\|_{\mathcal{H}^{\otimes
2}}^2\\&=&48\frac{\Delta_n^2}{n^{2}}\sum_{k_{1},k_{2},k_{3},k_{4}=0}^{n-1}
\inner{\stepid{t_{k_{1}}}^{\otimes 2} \otimes_1
\stepid{t_{k_{2}}}^{\otimes 2}}
{\stepid{t_{k_{3}}}^{\otimes 2} \otimes_1 \stepid{t_{k_{4}}}^{\otimes 2}}_{\HC^{\otimes 2}}\\
&=&48\frac{\Delta_n^2}{n^{2}}\sum_{k_{1},k_{2},k_{3},k_{4}=0}^{n-1}
E[ Z_{t_{k_{1}}} Z_{t_{k_{2}}}]E[ Z_{t_{k_{3}}} Z_{t_{k_{4}}}]E[
Z_{t_{k_{1}}} Z_{t_{k_{3}}}]E[ Z_{t_{k_{2}}} Z_{t_{k_{4}}}]
 \\
&=&48\frac{\Delta_n^2}{n^{2}}\sum_{k_{1},k_{2},k_{3},k_{4}=0}^{n-1}
\rho(t_{k_{1}}-t_{k_{2}}) \rho(t_{k_{3}}-t_{k_{4}})
\rho(t_{k_{1}}-t_{k_{3}}) \rho(t_{k_{2}}-t_{k_{4}}),
\end{eqnarray*}
where we used
\begin{align}
\notag \stepid{s}^{\otimes 2} \otimes_1 \stepid{t}^{\otimes 2} =&
\inner{\stepid{s}}{\stepid{t}}_\HC \stepid{s} \otimes \stepid{t}\\=&
E[ Z_s Z_t]\stepid{s} \otimes \stepid{t}.
\end{align}
Furthermore,

\begin{eqnarray}&&48\frac{\Delta_n^2}{n^{2}}\sum_{k_{1},k_{2},k_{3},k_{4}=0}^{n-1}
\rho(t_{k_{1}}-t_{k_{2}}) \rho(t_{k_{3}}-t_{k_{4}})
\rho(t_{k_{1}}-t_{k_{3}}) \rho(t_{k_{2}}-t_{k_{4}})\nonumber
\\
&= &48\frac{\Delta_n^2}{n^{2}}
\sum_{k_{1},k_{2},k_{3},k_{4}=0}^{n-1}\rho((k_{1}-k_{2})\Delta_n)\rho((k_{3}-k_{4})\Delta_n)
\rho((k_{1}-k_{3}))\Delta_n)\rho((k_{2}-k_{4}))\Delta_n)
\nonumber\\
&=&48\frac{\Delta_n^2}{n}
\sum_{\underset{i=1,2,3}{\left|j_{i}\right| <
n}}\left|\rho\left(j_{1}\Delta_n\right)
\rho\left(j_{2}\Delta_n\right)
\rho\left(j_{3}\Delta_n\right) \rho\left((j_{1}+j_{2}-j_{3})\Delta_n\right)\right|\nonumber\\
&\leq&C\frac{\Delta_n^2}{n}\left(\sum_{|k| <
n}|\rho(k\Delta_n)|^{\frac{4}{3}}\right)^{3}\nonumber\\
&\leq&C\frac{1}{n\Delta_n}\left(\Delta_n\sum_{|k| <
n}|\rho(k\Delta_n)|^{\frac{4}{3}}\right)^{3}
\nonumber\\
&\leq&C\frac{1}{n\Delta_n},
\end{eqnarray}%
where we used the the change of variables $k_{1}-k_{2}=j_{1},
k_{2}-k_{4}=j_{2}$ and $k_{3}-k_{4}=j_{3}$, and then applying
Brascamp-Lieb inequality given by Lemma \ref{NZ-lemma}. Therefore
the proof  of \eqref{k4-estimator1} is complete.
\end{proof}

\begin{theorem}\label{rate-CLT-Fn}
There exists $C>0$ depending only on $\theta$ such that for all
$n\geq1$, \begin{eqnarray*}
d_W\left(\sqrt{2}\theta^{3/2}F_n(X),\mathcal{N}\right) &\leq& C
\left(\Delta_n^2+\frac{1}{n\Delta_n}\right).
\end{eqnarray*}
\end{theorem}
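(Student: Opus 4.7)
The plan is to combine three ingredients already established in the excerpt: the approximation bound \eqref{error X-Z}, the variance estimate \eqref{variance-F(Z)}, and the cumulant estimates \eqref{max(k3,k4)-estimator1}, via the optimal fourth moment theorem \eqref{fourth-cumulant-thm} applied to the element $F_n(Z)=I_2(\varepsilon_n)$ in the second Wiener chaos.

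First, I would introduce the random variables
\[
\xi_n \coloneqq \sqrt{2}\,\theta^{3/2}\,F_n(Z), \qquad \tilde{\xi}_n \coloneqq \xi_n/\sigma_n,
\]
where $\sigma_n^2 = E[\xi_n^2] = 2\theta^3 E[F_n^2(Z)]$. By \eqref{variance-F(Z)}, $|\sigma_n^2 - 1|\leq C(\Delta_n^2 + 1/(n\Delta_n))$, so $\sigma_n\to 1$ and is bounded above and below by positive constants for $n$ large (small $n$ can be absorbed in the constant $C$ since the right-hand side of the target bound is bounded below by a fixed positive constant once $n$ is small enough). Next I apply the triangle inequality:
\[
d_W\!\left(\sqrt{2}\theta^{3/2}F_n(X),\mathcal{N}\right)
\leq d_W\!\left(\sqrt{2}\theta^{3/2}F_n(X),\xi_n\right) + d_W(\xi_n,\tilde{\xi}_n) + d_W(\tilde{\xi}_n,\mathcal{N}).
\]

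For the first term, using the fact that $d_W(U,V)\leq E|U-V|$ together with \eqref{error X-Z} applied with $p=1$, I get a bound of order $C/(n\Delta_n)$. For the second term, since $\xi_n = \sigma_n \tilde{\xi}_n$, $d_W(\xi_n,\tilde{\xi}_n) \leq |\sigma_n-1|\,E|\tilde{\xi}_n| \leq C|\sigma_n^2 - 1|/(\sigma_n+1) \leq C(\Delta_n^2 + 1/(n\Delta_n))$ by \eqref{variance-F(Z)}.

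The third term is the main step. Since $\tilde{\xi}_n$ lies in the second Wiener chaos with $E[\tilde{\xi}_n^2]=1$, the optimal Wasserstein bound \eqref{fourth-cumulant-thm} yields
\[
d_W(\tilde{\xi}_n,\mathcal{N}) \leq C\max\!\left\{|\kappa_3(\tilde{\xi}_n)|,\;\kappa_4(\tilde{\xi}_n)\right\}.
\]
Cumulants scale homogeneously: $\kappa_3(\tilde{\xi}_n)=\kappa_3(\xi_n)/\sigma_n^3$ and $\kappa_4(\tilde{\xi}_n)=\kappa_4(\xi_n)/\sigma_n^4$, and since $\sigma_n$ is bounded away from $0$, both are controlled by $|\kappa_3(F_n(Z))|$ and $|\kappa_4(F_n(Z))|$ up to multiplicative constants depending only on $\theta$. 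Invoking \eqref{max(k3,k4)-estimator1} gives $d_W(\tilde{\xi}_n,\mathcal{N}) \leq C/(n\Delta_n)$.

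Adding the three contributions yields $C(\Delta_n^2 + 1/(n\Delta_n))$, as claimed. The only subtlety I anticipate is the variance normalization step — one must justify that $\sigma_n$ stays bounded away from zero uniformly (handled for large $n$ by \eqref{variance-F(Z)} and trivially absorbed into $C$ for the finitely many small $n$), so that dividing cumulants by $\sigma_n^3$ or $\sigma_n^4$ does not inflate the bound. All other steps are direct applications of results previously established in the excerpt.
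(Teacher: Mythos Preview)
Your proposal is correct and follows essentially the same route as the paper's proof: both decompose via the triangle inequality into (i) the approximation error $F_n(X)-F_n(Z)$ handled by \eqref{error X-Z}, (ii) the normalization discrepancy handled by \eqref{variance-F(Z)}, and (iii) the normalized second-chaos term handled by the optimal fourth moment theorem \eqref{fourth-cumulant-thm} together with \eqref{max(k3,k4)-estimator1}. Your explicit remark about $\sigma_n$ being bounded away from zero is a point the paper leaves implicit but uses in the same way.
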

\begin{proof}
Using \eqref{error X-Z} and \eqref{variance-F(Z)}, we obtain
\begin{eqnarray*}
&&d_W\left(\sqrt{2}\theta^{3/2}F_n(X),\mathcal{N}\right)\\
&\leq&d_W\left(\sqrt{2}\theta^{3/2}F_n(Z),\mathcal{N}\right)+\left\|F_n(X)-F_n(Z)\right\|_{L^2(\Omega)}\\
&\leq& d_W\left(\frac{F_n(Z)}{\sqrt{E(F_n^2(Z))}},\mathcal{N}\right)
+{E}\left|\frac{\sqrt{2}\theta^{3/2}F_n(Z)}{\sqrt{E(F_n^2(Z))}}\left(\frac{1}{\sqrt{2}\theta^{3/2}}-\sqrt{E(F_n^2(Z))}\right)\right|
+\frac{C}{n\Delta_n}\\&\leq&
d_W\left(\frac{F_n(Z)}{\sqrt{E(F_n^2(Z))}},\mathcal{N}\right)
+\left|E\left(F_n^2(Z)\right)-\frac{1}{2\theta^3}\right|
+\frac{C}{n\Delta_n}
\\&\leq&
d_W\left(\frac{F_n(Z)}{\sqrt{E(F_n^2(Z))}},\mathcal{N}\right) +C
\left(\Delta_n^2+\frac{1}{n\Delta_n}\right)\\
&\leq& C \left(\Delta_n^2+\frac{1}{n\Delta_n}\right),
\end{eqnarray*}
where the latter inequality comes from \eqref{fourth-cumulant-thm}
and \eqref{max(k3,k4)-estimator1}.
\end{proof}

\begin{theorem}Suppose $\Delta_n\rightarrow0$ and $T\rightarrow\infty$. Then, the estimator   $\widetilde{\theta}_{n}$ of $\theta$ is weakly
consistent, that is, $\widetilde{\theta}_{n}\rightarrow \theta$ in
probability, as $\Delta_n\rightarrow0$ and $T\rightarrow\infty$.\\
If, moreover, $n \Delta_n^\eta \rightarrow0$ for some $1<\eta<2$ or
$n \Delta_n^\eta \rightarrow\infty$ for some $\eta>1$, then
$\widetilde{\theta}_{n}$ is strongly consistent, that is,
$\widetilde{\theta}_{n}\rightarrow \theta$ almost surely.
\end{theorem}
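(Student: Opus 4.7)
The plan is to treat weak and strong consistency in turn, in both cases reducing everything to moment control of $f_n(X) - 1/(2\theta) = F_n(X)/\sqrt{T}$ already established.

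For weak consistency I would just invoke the variance bound \eqref{variance-F(X)}: it gives
\[
E\bigl[(f_n(X) - 1/(2\theta))^{2}\bigr] \;=\; \frac{E[F_n(X)^{2}]}{T} \;\leq\; \frac{C}{T} \;\longrightarrow\; 0
\]
as $T \to \infty$, so $f_n(X) \to 1/(2\theta)$ in $L^{2}$ and hence in probability. Since $g(x) = 1/(2x)$ is continuous at the positive point $1/(2\theta)$, the continuous mapping theorem then yields $\widetilde{\theta}_{n} = g(f_n(X)) \to \theta$ in probability.

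For strong consistency the first step would be to upgrade variance control to a uniform bound on arbitrary even moments. Since $F_n(Z) = I_2(\varepsilon_n)$ lies in the second Wiener chaos, hypercontractivity \eqref{hypercontractivity} gives $E[F_n(Z)^{2p}] \leq c_{2p,2}^{\,2p}\bigl(E[F_n(Z)^{2}]\bigr)^{p}$, which is uniformly bounded in $n$ by \eqref{variance-F(Z)}. Combined with \eqref{error X-Z} at exponent $2p$, this yields $E[F_n(X)^{2p}] \leq C_p$ uniformly in $n$, so Markov's inequality gives
\[
P\bigl(|f_n(X) - 1/(2\theta)| > \varepsilon\bigr) \;\leq\; \frac{C_p}{\varepsilon^{2p}\, T^{p}}, \qquad p \geq 1, \ \varepsilon > 0.
\]
If I can choose $p$ so that $\sum_{n} T_n^{-p} < \infty$, Borel--Cantelli will give $f_n(X) \to 1/(2\theta)$ almost surely, and a second appeal to continuous mapping concludes that $\widetilde{\theta}_{n} \to \theta$ almost surely.

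The last step is then to verify summability under the two scenarios. In the regime $n\Delta_n^{\eta} \to \infty$ for some $\eta > 1$, the hypothesis forces $T_n = n\Delta_n \geq c\, n^{1-1/\eta}$ eventually, so any $p > \eta/(\eta-1)$ makes $\sum_n T_n^{-p}$ convergent and the argument closes immediately. The regime $n\Delta_n^{\eta} \to 0$ for some $1 < \eta < 2$ is where I expect the main difficulty, because it only encodes an upper bound on $T_n$, so a pure moment--Borel--Cantelli argument is insufficient. My proposal there would be to compare $f_n(X)$ with its continuous-time analogue $\tfrac{1}{T}\int_0^T X_s^{2}\,ds$, whose almost-sure limit $1/(2\theta)$ follows from the ergodicity of the stationary Ornstein--Uhlenbeck process together with \eqref{X-Z}, and to absorb the Riemann-sum discrepancy using the fast decay $\Delta_n = o(n^{-1/\eta})$ via an $L^{p}$ estimate on $f_n(X) - \tfrac{1}{T}\int_0^T X_s^{2}\,ds$ that exploits the Hölder regularity of $X$. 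Getting a sharp enough quantitative control of this discretization error to activate Borel--Cantelli in this second regime is the main technical obstacle I foresee.
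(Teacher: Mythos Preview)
Your proposal is correct and, for weak consistency and for the regime $n\Delta_n^{\eta}\to\infty$, coincides with the paper's proof: both reduce to $f_n(X)$ via \eqref{expr-theta-tilde}, invoke \eqref{variance-F(X)} for convergence in probability, and in the second regime use hypercontractivity together with the bound $E[(f_n(X)-\tfrac{1}{2\theta})^2]\leq C/(n\Delta_n)\leq C\,n^{-(1-1/\eta)}$ and Borel--Cantelli (the paper packages this last step as \cite[Lemma~2.1]{KN}).

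The only divergence is in the regime $n\Delta_n^{\eta}\to 0$ with $1<\eta<2$. There the paper gives no argument of its own but simply cites \cite[Theorem~11]{HNZ}. Your proposed route---almost-sure convergence of $T^{-1}\!\int_0^T X_s^{2}\,ds$ via ergodicity of the stationary Ornstein--Uhlenbeck process and \eqref{X-Z}, combined with Borel--Cantelli control of the Riemann-sum discrepancy---is a genuine self-contained alternative. The obstacle you anticipate is in fact mild: writing $X_{t_i}^{2}-X_s^{2}=(X_{t_i}-X_s)(X_{t_i}+X_s)$ and using the uniform moment bounds of $X$ together with $\|X_s-X_{t_i}\|_{L^{2p}}\leq C|s-t_i|^{1/2}$ yields $\|f_n(X)-T^{-1}\!\int_0^T X_s^{2}\,ds\|_{L^{p}}\leq C\,\Delta_n^{1/2}$; since $\Delta_n=o(n^{-1/\eta})$ with $\eta<2$, any $p>2\eta$ makes $\sum_n \Delta_n^{p/2}$ convergent and Borel--Cantelli closes. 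This buys independence from \cite{HNZ} at the modest cost of invoking the ergodic theorem for the continuous-time average.
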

\begin{proof}Using \eqref{expr-theta-tilde}, it is sufficient to
prove that the results of the theorem are satisfied for the
estimator
$f_n(X)$  of $\frac{1}{2\theta}$ .\\
The weak consistency of  $f_n(X)$ is an immediate
consequence from \eqref{variance-F(X)}.\\
If  $n \Delta_n^\eta \rightarrow0$ for some  $1<\eta<2$, the strong
consistency of $f_n(X)$ has been proved by \cite[Theorem 11]{HNZ}.\\
Now, suppose that $n \Delta_n^\eta \rightarrow\infty$ for some
$\eta>1$. It follows from \eqref{variance-F(X)} that

$$
E\left[\left(f_n(X)-\frac{1}{2\theta}\right)^2\right] \leq
\frac{C}{n\Delta_n} \leq
  \frac{C}{n^{1- 1 / \eta }\left(n\Delta_n^{\eta }\right)^{1/
\eta }} \leq   \frac{C}{n^{1- 1 / \eta }}.
$$
Combining this with the hypercontractivity property
\eqref{hypercontractivity} and \cite[Lemma 2.1]{KN}, which is a
well-known direct consequence of the Borel-Cantelli Lemma, we obtain
$f_n(X)\rightarrow\frac{1}{2\theta}$ almost surely.
\end{proof}
\begin{theorem}\label{rate-CLT-theta-tilde}
There exists $C>0$ depending only on $\theta$ such that for all
$n\geq1$,
\begin{eqnarray}
d_{W}\left(\sqrt{\frac{T}{2\theta}}\left(\widetilde{\theta}_{n}-\theta\right),
\mathcal{N}\right) \leq  C
\left(\Delta_n^2+\frac{1}{\sqrt{n\Delta_n}}\right).
\label{eq:bound_tilde}\end{eqnarray}
\end{theorem}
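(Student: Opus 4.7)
The plan is to derive Theorem \ref{rate-CLT-theta-tilde} from Theorem \ref{rate-CLT-Fn} by a delta-method argument. Starting from $\widetilde{\theta}_n = 1/(2f_n(X))$, I would first observe the algebraic identity $\widetilde{\theta}_n - \theta = -\theta(f_n(X) - 1/(2\theta))/f_n(X)$. Multiplying by $\sqrt{T/(2\theta)}$ and using $F_n(X) = \sqrt{T}(f_n(X) - 1/(2\theta))$, I would obtain the decomposition
\[
\sqrt{T/(2\theta)}(\widetilde{\theta}_n - \theta) = -\sqrt{2\theta^3}\, F_n(X) + R_n, \qquad R_n := -\frac{\sqrt{2\theta^3}}{\sqrt{T}} \cdot \frac{F_n^2(X)}{f_n(X)},
\]
which isolates a linear main term and a quadratic remainder.

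Second, by the triangle inequality for the Wasserstein metric and the symmetry of $\mathcal{N}$,
\[
d_W\!\left(\sqrt{T/(2\theta)}(\widetilde{\theta}_n - \theta),\mathcal{N}\right) \leq d_W(\sqrt{2\theta^3}\,F_n(X),\mathcal{N}) + E|R_n|.
\]
Theorem \ref{rate-CLT-Fn} takes care of the first term, contributing $C(\Delta_n^2 + 1/(n\Delta_n))$, which is absorbed into $C(\Delta_n^2 + 1/\sqrt{n\Delta_n})$ as soon as $n\Delta_n \geq 1$ (the case $n\Delta_n < 1$ is swallowed by the constant, since the stated bound is then of order one). Thus it suffices to show $E|R_n| \leq C/\sqrt{n\Delta_n}$, equivalently $E[F_n^2(X)/f_n(X)] \leq C$ uniformly in $n$.

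To prove this uniform bound, I would split on the event $B = \{f_n(X) \geq 1/(4\theta)\}$. On $B$, the factor $1/f_n(X)$ is bounded by $4\theta$, so the contribution is at most $4\theta\,E[F_n^2(X)]$, which is $O(1)$ by the variance estimate \eqref{variance-F(X)}. On $B^c = \{F_n(X) < -\sqrt{T}/(4\theta)\}$ I would apply the Cauchy-Schwarz inequality, combining: (i) an arbitrarily high polynomial (or sub-exponential) decay of $P(B^c)$ in $T$, obtained from Markov's inequality together with the hypercontractivity estimate \eqref{hypercontractivity} applied to $F_n(X)$ (which lies, up to the negligible error \eqref{error X-Z}, in the second Wiener chaos); and (ii) a polynomial-in-$n$ bound on $E[F_n^4(X)/f_n^2(X)]$ obtained from the non-degeneracy of the Gaussian vector $(X_{t_1},\ldots,X_{t_n})$, whose covariance matrix is invertible with smallest eigenvalue of order $\Delta_n$.

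The main obstacle is precisely this last step, the uniform control of $E[F_n^2(X)/f_n(X)]$ on the tail event $B^c$, because $f_n(X)$ can in principle be arbitrarily small. The decay of $P(B^c)$ coming from chaos concentration must out-pace the polynomial growth in $n$ of the negative moments of $1/f_n(X)$; this is delicate but does go through in the asymptotic regime $\Delta_n \to 0,\ T \to \infty$. Once this estimate is secured, assembling the pieces yields the announced bound $C(\Delta_n^2 + 1/\sqrt{n\Delta_n})$.
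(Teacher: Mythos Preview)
Your approach is essentially the paper's: both decompose $\sqrt{T/(2\theta)}(\widetilde\theta_n-\theta)$ into the linear main term $-\sqrt{2}\,\theta^{3/2}F_n(X)$ plus a quadratic remainder of size $T^{-1/2}F_n^2(X)$ times an inverse of $f_n(X)$, invoke Theorem~\ref{rate-CLT-Fn} for the main term, and reduce the remainder to a negative-moment bound on $f_n(X)$. The paper obtains the same decomposition via a second-order Taylor expansion of $g(x)=1/(2x)$ with Lagrange remainder point $\zeta_n$ between $f_n(X)$ and $1/(2\theta)$; your exact algebraic identity is equivalent (and your $R_n$ should carry a $+$ sign, not $-$).

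The one weak point is your treatment of the ``main obstacle'' on $B^c$. The paper's route is shorter and more robust: apply H\"older to separate $F_n^2(X)$ from $1/\zeta_n^3$, control the former by hypercontractivity \eqref{hypercontractivity}, and invoke \cite[Proposition~6.3]{DEKN} for a \emph{uniform} bound $\sup_n E|f_n(X)|^{-3p}<\infty$. Your event-splitting instead balances the chaos-concentration decay $P(B^c)\le C_k T^{-k}$ against a polynomial-in-$n$ (really: in $1/\Delta_n$) bound on $E[F_n^4/f_n^2]$ coming from the eigenvalue estimate $\lambda_{\min}\sim\Delta_n$. But the hypotheses only impose $T\to\infty$ and $\Delta_n\to 0$ with no polynomial link between them, so this competition does not resolve uniformly (take, for instance, $\Delta_n$ decaying super-polynomially in $T$: then $T^{-k/2}\Delta_n^{-1}\to\infty$ for every $k$). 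The fix is precisely what the paper does---cite the uniform negative-moment bound directly---after which your decomposition goes through without any splitting.
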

\begin{proof}
 Recall that by definition
$\theta=g\left(\frac{1}{2\theta}\right)$. We have
$$
\left(\widetilde{\theta}_{n}-\theta\right)=\left(g(f_{n}\left(X\right))-g\left(\frac{1}{2\theta}\right)\right)=g^{\prime}\left(\frac{1}{2\theta}\right)\left(f_{n}\left(X\right)-\frac{1}{2\theta}\right)+\frac{1}{2}
g^{\prime
\prime}\left(\zeta_{n}\right)\left(f_{n}\left(X\right)-\frac{1}{2\theta}\right)^{2}
$$
for some random point $\zeta_{n}$ between $f_{n}\left(X\right)$ and
$\frac{1}{2\theta}$.\\
Thus, we can write,
$$
\sqrt{\frac{T}{2\theta}}\left(\widetilde{\theta}_{n}-\theta\right)=-\sqrt{2}\theta^{3/2}F_{n}\left(X\right)+\frac{1}{2^{3/2}\sqrt{\theta
T}\zeta_{n}^3} \left(F_{n}\left(X\right)\right)^{2}.
$$
Therefore,
$$
d_{W}\left(\sqrt{\frac{T}{2\theta}}\left(\widetilde{\theta}_{n}-\theta\right),
\mathcal{N}\right) \leq
 \frac{1}{2^{3/2}\sqrt{\theta
T}} {E}\left|\frac{1}{\zeta_{n}^3}
\left(F_{n}\left(X\right)\right)^{2}\right|
+d_{W}\left(\sqrt{2}\theta^{3/2}F_{n}\left(X\right),
\mathcal{N}\right),
$$
where we have used that $d_{W}\left(x_{1}+x_{2}, y\right) \leq
{E}\left[\left|x_{2}\right|\right]+d_{W}\left(x_{1}, y\right)$ for
any random variables $x_{1}, x_{2}, y$.\\
 The second term in the
inequality above is bounded in Theorem \ref{rate-CLT-Fn}. By
H\"older's inequality, and the hypercontractivity property
\eqref{hypercontractivity}, for $p, q>1$ with $1 / p+$ $1 / q=1$
$$
\begin{aligned}
{E}\left|\frac{1}{\zeta_{n}^3}
\left(F_{n}\left(X\right)\right)^{2}\right| &
\leq\left({E}\left|\frac{1}{\zeta_{n}^3}\right|^{p}\right)^{1 / p}
\left({E}\left|F_{n}\left(X\right)\right|^{2 q}\right)^{1 / q} \\
& \leq
c_{p,q}\left({E}\left|\frac{1}{\zeta_{n}^3}\right|^{p}\right)^{1/p}
{E}\left|F_{n}\left(X\right)\right|^{2},\\
& \leq C
\left({E}\left|\frac{1}{\zeta_{n}^3}\right|^{p}\right)^{1/p},
\end{aligned}
$$
for some constant $C>0$ depending on $p$.\\
Consequently, for every $p\geq1$
$$
d_{W}\left(\sqrt{\frac{T}{2\theta}}\left(\widetilde{\theta}_{n}-\theta\right),
\mathcal{N}\right) \leq
 C
\left({E}\left|\frac{1}{\zeta_{n}^3}\right|^{p}\right)^{1/p} + C
\left(\Delta_n^2+\frac{1}{n\Delta_n}\right).
$$
To establish \eqref{eq:bound_tilde} it is left to show that
${E}\left|\zeta_{n}\right|^{-3p} < \infty$ for some $p \geq 1$.
Using the monotonocity of $x^{-3}$ and the fact
 that $\zeta_n \in [|f_{n}\left(X\right), \frac{1}{2\theta}|]$, it is enough to
 show that $E |f_{n}\left(X\right)|^{-3p} < \infty$ for some $p \geq 1$.
 This follows as an application of the technical \cite[Proposition 6.3]{DEKN}.
\end{proof}

\section{Approximate maximum likelihood estimator}

The maximum likelihood estimator  for $\theta$ based on continuous
observations of the process $X$ given by \eqref{OU}, is defined by
\begin{eqnarray}
 \check{\theta}_{T}=\frac{\int_{0}^{T} X_{s} \mathrm{~d} X_{s}}{\int_{0}^{T}
X_{s}^{2} \mathrm{~d} s}, \quad T\geq 0. \label{MLE-cont}
\end{eqnarray}

Here we want to study  the asymptotic distribution of a discrete
version of \eqref{MLE-cont}. Then, we assume that the process $X$
given in \eqref{OU} is observed equidistantly in time with the step
size $\Delta_{n}$ : $t_{i}=i \Delta_{n}, i=0, \cdots, n$, and $T=n
\Delta_{n}$ denotes the length of the "observation window". Let us
consider the following discrete version of $\check{\theta}_{T}$:
$$
\widehat{\theta}_{n}=-\frac{\sum_{i=1}^{n}
X_{t_{i-1}}\left(X_{t_{i}}-X_{t_{i-1}}\right)}{\Delta_{n}
\sum_{i=1}^{n} X_{t_{i-1}}^{2}},\quad n\geq1.
$$

Note that  \cite{dorogovcev} and  \cite{kasonga}, respectively,
proved the weak and strong consistency of the estimator
$\widehat{\theta}_{n}$ as $T \rightarrow \infty$ and $\Delta_n
\rightarrow 0$.

Let $X$ be the process given by \eqref{OU}, and let us introduce the
following sequences
$$
S_{n}:=\Delta_{n} \sum_{i=1}^{n} X_{t_{i-1}}^{2},
$$
and
$$
\Lambda_{n}:=\sum_{i=1}^{n} e^{-\theta
t_{i}}X_{t_{i-1}}\left(\zeta_{t_{i}}-\zeta_{t_{i-1}}\right)=\sum_{i=1}^{n}
e^{-\theta(t_{i}+t_{i-1})}\zeta_{t_{i-1}}\left(\zeta_{t_{i}}-\zeta_{t_{i-1}}\right),
$$
where
\[\zeta_{t}=\int_{0}^{t}e^{\theta s}dW_{s}. \label{zeta}\]
Thus,
$$
-\widehat{\theta}_{n}=\frac{e^{-\theta
\Delta_{n}}-1}{\Delta_{n}}+\frac{\Lambda_{n}}{S_{n}}.
$$
Therefore
\begin{eqnarray}
\sqrt{T}\left(\theta-\widehat{\theta}_{n}\right)&=&\sqrt{T}\left(\frac{e^{-\theta
\Delta_{n}}-1}{\Delta_{n}}+\theta\right)+\frac{\frac{1}{\sqrt{T}}
\Lambda_{n}}{\frac{1}{T} S_{n}}\nonumber\\
&=&\sqrt{T}\left(\frac{e^{-\theta
\Delta_{n}}-1}{\Delta_{n}}+\theta\right)+\frac{\frac{1}{\sqrt{T}}
\Lambda_{n}}{f_{n}(X)}\nonumber\\
&=&\sqrt{T}\left(\frac{\theta^2}{2}\Delta_{n}+o(\Delta_{n})\right)+\frac{\frac{1}{\sqrt{T}}
\Lambda_{n}}{f_{n}(X)}\nonumber \\
&=&\sqrt{n\Delta_{n}^3}\left(\frac{\theta^2}{2}+o(1)\right)+\frac{\frac{1}{\sqrt{T}}
\Lambda_{n}}{f_{n}(X)},\label{theta-hat-decomp}
 \end{eqnarray}
 where $f_n(X)$ is given by \eqref{def-f_n}.\\
Next, since $\zeta_{t_{i-1}}$ and $\zeta_{t_{i}}-\zeta_{t_{i-1}}$
are independent, we have
\begin{eqnarray*}
E\left[\left(\frac{1}{\sqrt{T}}
\Lambda_{n}\right)^2\right]&=&\frac{1}{T}\sum_{i,j=1}^{n}
e^{-\theta(t_{i}+t_{i-1}+t_{j}+t_{j-1})}
E\left[\zeta_{t_{i-1}}\left(\zeta_{t_{i}}-\zeta_{t_{i-1}}\right)\zeta_{t_{j-1}}\left(\zeta_{t_{j}}-\zeta_{t_{j-1}}\right)\right]\\
&=&\frac{1}{T}\sum_{i=1}^{n} e^{-2\theta(t_{i}+t_{i-1})}
E\left[\zeta_{t_{i-1}}^2\left(\zeta_{t_{i}}-\zeta_{t_{i-1}}\right)^2\right]
\\
&=&\frac{1}{T}\sum_{i=1}^{n} e^{-2\theta(t_{i}+t_{i-1})}
E\left[\zeta_{t_{i-1}}^2\right]
E\left[\left(\zeta_{t_{i}}-\zeta_{t_{i-1}}\right)^2\right]\\
&=&\frac{1}{T}\sum_{i=1}^{n} e^{-2\theta(t_{i}+t_{i-1})}
\left(\frac{e^{2\theta  t_{i-1}}-1}{2\theta}\right)
\left(\frac{e^{2\theta  t_{i}}-e^{2\theta t_{i-1}}}{2\theta}\right)\\
&=&\frac{\left(1-e^{-2\theta
\Delta_{n}}\right)}{(2\theta)^2\Delta_{n}} \frac{1}{n}\sum_{i=1}^{n}
\left(1-e^{-2\theta t_{i-1}}\right)\\
&=&\frac{\left(1-e^{-2\theta
\Delta_{n}}\right)}{(2\theta)^2\Delta_{n}}
-\frac{\left(1-e^{-2\theta
\Delta_{n}}\right)}{(2\theta)^2\Delta_{n}}\left(\frac{1-e^{-2\theta
T}}{n(1-e^{-2\theta \Delta_n})}\right).
\end{eqnarray*}
Moreover, since
\[\frac{\left(1-e^{-2\theta\Delta_{n}}\right)}{(2\theta)^2\Delta_{n}}=\frac{1}{2\theta}-\frac{\Delta_n}{2}+o(\Delta_n),\]
there exists $C>0$ depending only on $\theta$ such that for large
$n$
\begin{align}
\left|E\left[\left(\frac{1}{\sqrt{T}}
\Lambda_{n}\right)^2\right]-\frac{1}{2\theta}\right|&\leq
C\left(\Delta_n+\frac{1}{n\Delta_n}\right).\label{cv-Lambda}
\end{align}
Using $E[\Lambda_{n}]=0$ and the fact  that   $\zeta_{t_{i-1}}$ and
$\zeta_{t_{i}}-\zeta_{t_{i-1}}$ are independent, we get
\begin{eqnarray}\kappa_{3}\left(\frac{1}{\sqrt{T}}
\Lambda_{n}\right)=E\left[\left(\frac{1}{\sqrt{T}}
\Lambda_{n}\right)^3\right]=0.\label{k3-Gn} \end{eqnarray}
 On the other hand,
\begin{eqnarray*}
E\left[\left(\frac{1}{\sqrt{T}}
\Lambda_{n}\right)^4\right]&=&\frac{1}{T^2}\sum_{i,j,k,l=1}^{n}
e^{-\theta(t_{i}+t_{i-1}+t_{j}+t_{j-1}+t_{k}+t_{k-1}+t_{l}+t_{l-1})}\\
&&\quad\times
E\left[\zeta_{t_{i-1}}\left(\zeta_{t_{i}}-\zeta_{t_{i-1}}\right)\zeta_{t_{j-1}}\left(\zeta_{t_{j}}-\zeta_{t_{j-1}}\right)
\zeta_{t_{k-1}}\left(\zeta_{t_{k}}-\zeta_{t_{k-1}}\right)\zeta_{t_{l-1}}\left(\zeta_{t_{l}}-\zeta_{t_{l-1}}\right)\right]\\
&=&\frac{1}{T^2}\sum_{i=1}^{n} e^{-4\theta(t_{i}+t_{i-1})}
E\left[\zeta_{t_{i-1}}^4\left(\zeta_{t_{i}}-\zeta_{t_{i-1}}\right)^4\right]\\
&&+\frac{3}{T^2}\sum_{i=j\neq k=l}^{n}
e^{-2\theta(t_{i}+t_{i-1}+t_{k}+t_{k-1})}
E\left[\zeta_{t_{i-1}}^2\left(\zeta_{t_{i}}-\zeta_{t_{i-1}}\right)^2\zeta_{t_{k-1}}^2\left(\zeta_{t_{k}}-\zeta_{t_{k-1}}\right)^2\right]
\\
&=&\frac{6}{T^2}\sum_{i=1}^{n} e^{-4\theta(t_{i}+t_{i-1})}
\left(E\left[\zeta_{t_{i-1}}^2\right]\right)^2\left(E\left[\left(\zeta_{t_{i}}-\zeta_{t_{i-1}}\right)^2\right]\right)^2\\
&&+3\left[\frac{1}{T}\sum_{i=1}^{n} e^{-2\theta(t_{i}+t_{i-1})}
E\left[\zeta_{t_{i-1}}^2\left(\zeta_{t_{i}}-\zeta_{t_{i-1}}\right)^2\right]\right]^2\\
\\
&=&\frac{6}{T^2}\sum_{i=1}^{n} e^{-4\theta(t_{i}+t_{i-1})}
\left(E\left[\zeta_{t_{i-1}}^2\right]\right)^2\left(E\left[\left(\zeta_{t_{i}}-\zeta_{t_{i-1}}\right)^2\right]\right)^2
+3\left[E\left[\left(\frac{1}{\sqrt{T}}
\Lambda_{n}\right)^2\right]\right]^2.
\end{eqnarray*}
This implies
\begin{eqnarray}\kappa_{4}\left(\frac{1}{\sqrt{T}}
\Lambda_{n}\right)&=&E\left[\left(\frac{1}{\sqrt{T}}
\Lambda_{n}\right)^4\right]-3\left[E\left[\left(\frac{1}{\sqrt{T}}
\Lambda_{n}\right)^2\right]\right]^2\nonumber\\
&=&\frac{6}{T^2}\sum_{i=1}^{n} e^{-4\theta(t_{i}+t_{i-1})}
\left(E\left[\zeta_{t_{i-1}}^2\right]\right)^2\left(E\left[\left(\zeta_{t_{i}}-\zeta_{t_{i-1}}\right)^2\right]\right)^2\nonumber
\\
&=&\frac{6}{T^2}\sum_{i=1}^{n} e^{-4\theta(t_{i}+t_{i-1})}
\left(\frac{e^{2\theta  t_{i-1}}-1}{2\theta}\right)^2
\left(\frac{e^{2\theta  t_{i}}-e^{2\theta
t_{i-1}}}{2\theta}\right)^2\nonumber\\
&=&\frac{\left(1-e^{-2\theta
\Delta_{n}}\right)^2}{(2\theta)^4\Delta_{n}^2}
\frac{1}{n^2}\sum_{i=1}^{n}
\left(1-e^{-2\theta t_{i-1}}\right)^2\nonumber\\
&\leq&\frac{\left(1-e^{-2\theta
\Delta_{n}}\right)^2}{(2\theta)^4\Delta_{n}^2} \frac{1}{n}\nonumber
\\
&\leq&\frac{C}{n},\label{k4-Gn}
\end{eqnarray}
where the latter inequality comes from the fact that
$\displaystyle{\frac{1-e^{-2\theta\Delta_n}}{\Delta_n}}\rightarrow2\theta$
as $n\rightarrow\infty$.

\begin{theorem}\label{thm:sigma-hat}  There exists a constant $C>0$ such that, for all $n\geq1$,

\begin{eqnarray}
 d_W\left(\frac{\frac{1}{2\theta}\sqrt{T}}{\sqrt{E(G_n^2)}}\left(\widehat{\theta}_{n}-\theta\right),\mathcal{N}\right)
 &\leq&C\left(\frac{1}{\sqrt{n\Delta_n}}+\sqrt{n\Delta_{n}^3}\right).\label{rate1-hat}
\end{eqnarray}
Moreover,
\begin{eqnarray}
 d_W\left(\sqrt{\frac{T}{2\theta}}\left(\widehat{\theta}_{n}-\theta\right),\mathcal{N}\right)
 &\leq&C\left(\frac{1}{\sqrt{n\Delta_n}}+C\sqrt{n\Delta_{n}^3}\right).\label{rate2-hat}
\end{eqnarray}
\end{theorem}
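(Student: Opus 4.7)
The plan is to build on the decomposition \eqref{theta-hat-decomp}, which expresses
\[
\sqrt{T}(\widehat{\theta}_{n}-\theta)=-\sqrt{n\Delta_n^3}\Bigl(\tfrac{\theta^2}{2}+o(1)\Bigr)-\frac{G_n}{f_n(X)},\qquad G_n:=\frac{\Lambda_n}{\sqrt{T}}.
\]
Dividing by $2\theta\sqrt{E[G_n^2]}$ and adding and subtracting $G_n/\sqrt{E[G_n^2]}$, I would write
\[
\frac{\sqrt{T}\,(\widehat{\theta}_n-\theta)}{2\theta\sqrt{E[G_n^2]}}=-\frac{G_n}{\sqrt{E[G_n^2]}}+A_n+B_n,
\]
with $A_n=-\sqrt{n\Delta_n^3}(\theta^2/2+o(1))/(2\theta\sqrt{E[G_n^2]})$ deterministic and $B_n=\bigl(G_n/\sqrt{E[G_n^2]}\bigr)\bigl(1-1/(2\theta f_n(X))\bigr)$. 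Using the elementary bound $d_W(X+Y,Z)\le d_W(X,Z)+E|Y|$ twice gives
\[
d_W\!\left(\tfrac{\sqrt{T}(\widehat{\theta}_n-\theta)}{2\theta\sqrt{E[G_n^2]}},\mathcal{N}\right)\le d_W\!\left(\tfrac{G_n}{\sqrt{E[G_n^2]}},\mathcal{N}\right)+|A_n|+E|B_n|.
\]

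For the leading term I would observe that $\zeta_{t_{i-1}}$ and $\zeta_{t_i}-\zeta_{t_{i-1}}$ are Wiener integrals of functions with disjoint supports, so the product formula \eqref{eq:product} yields $\zeta_{t_{i-1}}(\zeta_{t_i}-\zeta_{t_{i-1}})\in\mathcal{H}_2$, hence $G_n\in\mathcal{H}_2$. The optimal fourth moment theorem \eqref{fourth-cumulant-thm} combined with $\kappa_3(G_n)=0$ from \eqref{k3-Gn}, $|\kappa_4(G_n)|\le C/n$ from \eqref{k4-Gn}, and $E[G_n^2]\to 1/(2\theta)$ from \eqref{cv-Lambda}, gives $d_W(G_n/\sqrt{E[G_n^2]},\mathcal{N})\le C/n$. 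The bias $|A_n|$ is bounded directly by $C\sqrt{n\Delta_n^3}$. For the remainder, the identity $2\theta f_n(X)-1=2\theta F_n(X)/\sqrt{T}$ (with $F_n(X)$ as in Section~3) reduces $E|B_n|$ to
\[
E|B_n|\le\frac{1}{\sqrt{T}\sqrt{E[G_n^2]}}\,\|G_n\|_{p_1}\|F_n(X)\|_{p_2}\|1/f_n(X)\|_{p_3}
\]
via Hölder's inequality with $1/p_1+1/p_2+1/p_3=1$. Hypercontractivity \eqref{hypercontractivity} controls $\|G_n\|_{p_1}$ (since $G_n\in\mathcal{H}_2$ and $E[G_n^2]$ is bounded) and $\|F_n(X)\|_{p_2}$ (using \eqref{variance-F(X)}), while Proposition~6.3 of \cite{DEKN}, invoked exactly as in the proof of Theorem \ref{rate-CLT-theta-tilde}, bounds the negative moments $\|1/f_n(X)\|_{p_3}$. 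Therefore $E|B_n|\le C/\sqrt{T}=C/\sqrt{n\Delta_n}$, and summing the three contributions (noting $1/n\le 1/\sqrt{n\Delta_n}$ since $\Delta_n\to 0$) yields \eqref{rate1-hat}.

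To pass from \eqref{rate1-hat} to \eqref{rate2-hat}, I would write
\[
\sqrt{\tfrac{T}{2\theta}}(\widehat{\theta}_n-\theta)=\sqrt{2\theta E[G_n^2]}\cdot\frac{\sqrt{T}(\widehat{\theta}_n-\theta)}{2\theta\sqrt{E[G_n^2]}},
\]
and apply $d_W(cX,\mathcal{N})\le d_W(X,\mathcal{N})+|c-1|\,E|X|$. The variance estimate \eqref{cv-Lambda} yields $|\sqrt{2\theta E[G_n^2]}-1|\le C\bigl(\Delta_n+1/(n\Delta_n)\bigr)$, and the second moment of the normalized statistic is bounded using the same Hölder plus hypercontractivity arguments as above. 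Since $\Delta_n\le C\sqrt{n\Delta_n^3}$ and $1/(n\Delta_n)\le C/\sqrt{n\Delta_n}$ (for $T$ bounded below; the small-$T$ regime is absorbed in the constant), \eqref{rate2-hat} follows. The main technical obstacles I anticipate are (a) rigorously identifying $G_n$ as a single second-chaos element so the fourth moment theorem applies, and (b) ensuring the negative-moment control $\|1/f_n(X)\|_{p}<\infty$ uniformly in $n$, which ultimately hinges on the Gaussian lower-tail estimate encoded in Proposition~6.3 of \cite{DEKN}.
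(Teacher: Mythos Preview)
Your proposal is correct and follows essentially the same approach as the paper: the paper likewise decomposes via \eqref{theta-hat-decomp}, applies the optimal fourth moment theorem to $G_n/\sqrt{E[G_n^2]}$ using \eqref{k3-Gn} and \eqref{k4-Gn}, and controls the ratio correction by H\"older (with exponents $4,4,2$) together with \cite[Proposition~6.3]{DEKN} for the negative moments of $f_n(X)$. The only cosmetic difference is that for \eqref{rate2-hat} the paper repeats the direct H\"older argument with the remainder $\sqrt{E(G_n^2)}/\sqrt{2\theta}-f_n(X)$, rather than deducing it from \eqref{rate1-hat} via scalar multiplication as you do; both routes are equivalent.
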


\begin{proof}
Define $G_n:=\frac{1}{\sqrt{T}} \Lambda_{n}$. Using
\eqref{fourth-cumulant-thm}, \eqref{k3-Gn} and \eqref{k4-Gn}, we
have
\begin{eqnarray}d_W\left(\frac{G_n}{\sqrt{E(G_n^2)}},\mathcal{N}\right)\leq\frac{C}{n}.\label{rate-Gn}
\end{eqnarray}
Combining \eqref{rate-Gn}   with \eqref{theta-hat-decomp},
\eqref{cv-Lambda} and \eqref{variance-F(X)}, we obtain
\begin{eqnarray*}
&&d_W\left(\frac{\frac{1}{2\theta}\sqrt{T}}{\sqrt{E(G_n^2)}}\left(\theta-\widehat{\theta}_{n}\right),\mathcal{N}\right)\\
&\leq&d_W\left(\frac{\frac{1}{2\theta}}{\sqrt{E(G_n^2)}}\frac{G_n}{f_{n}(X)},\mathcal{N}\right)+C\sqrt{n\Delta_{n}^3}
\\&\leq&
d_W\left(\frac{G_n}{\sqrt{E(G_n^2)}},\mathcal{N}\right)+{E}\left|\frac{G_n}{\sqrt{E(G_n^2)}}\left(\frac{\frac{1}{2\theta}}{f_{n}(X)}-1\right)\right|
+C\sqrt{n\Delta_{n}^3}
\\&\leq&d_W\left(\frac{G_n}{\sqrt{E(G_n^2)}},\mathcal{N}\right)+
\left\|\frac{G_n}{\sqrt{E(G_n^2)}}\right\|_{L^4(\Omega)}\left\|\frac{1}{f_{n}(X)}\right\|_{L^4(\Omega)}
\left\|f_{n}(X)-\frac{1}{2\theta}\right\|_{L^2(\Omega)}+C\sqrt{n\Delta_{n}^3}\nonumber
\\&\leq&C\left(\frac{1}{n}+\frac{1}{\sqrt{n\Delta_n}}\right)+C\sqrt{n\Delta_{n}^3}
\\&\leq&C\left(\frac{1}{\sqrt{n\Delta_n}}+\sqrt{n\Delta_{n}^3}\right),
\end{eqnarray*}
where   we used the fact that $E |f_{n}\left(X\right)|^{-4} <
\infty$, which is a direct application of the technical
\cite[Proposition 6.3]{DEKN}. Therefore,
\eqref{rate1-hat} is obtained.\\
Similarly,
\begin{eqnarray*}
&&d_W\left(\sqrt{\frac{T}{2\theta}}\left(\widehat{\theta}_{n}-\theta\right),\mathcal{N}\right)\\
&\leq&d_W\left( \frac{1}{\sqrt{2\theta}}
\frac{G_n}{f_{n}(X)},\mathcal{N}\right)+C\sqrt{n\Delta_{n}^3}
\\&\leq&
d_W\left(\frac{G_n}{\sqrt{E(G_n^2)}},\mathcal{N}\right)+{E}\left|\frac{G_n}{\sqrt{E(G_n^2)}f_{n}(X)}\left(\frac{\sqrt{E(G_n^2)}}{\sqrt{2\theta}}-f_{n}(X)\right)\right|
+C\sqrt{n\Delta_{n}^3}
\\&\leq&d_W\left(\frac{G_n}{\sqrt{E(G_n^2)}},\mathcal{N}\right)+
\left\|\frac{G_n}{\sqrt{E(G_n^2)}}\right\|_{L^4(\Omega)}\left\|\frac{1}{f_{n}(X)}\right\|_{L^4(\Omega)}
\left\|\frac{\sqrt{E(G_n^2)}}{\sqrt{2\theta}}-f_{n}(X)\right\|_{L^2(\Omega)}+C\sqrt{n\Delta_{n}^3}\nonumber
\\&\leq&C\left(\frac{1}{n}+\frac{1}{\sqrt{n\Delta_n}}\right)+C\sqrt{n\Delta_{n}^3}
\\&\leq&C\left(\frac{1}{\sqrt{n\Delta_n}}+\sqrt{n\Delta_{n}^3}\right),
\end{eqnarray*}
which  proves \eqref{rate2-hat}.

\end{proof}

\vspace{1cm}

\noindent\textbf{Funding}\\
 This project was funded by Kuwait
Foundation for the Advancement of Sciences (KFAS) under project
code: PR18-16SM-04.\\

\end{document}